\newcommand{\red}{\color{red}}
\newcommand{\black}{\color{black}}
\newtheorem{facts}[example]{Facts}
\newtheorem{examples}[example]{Examples}
\begin{document}
\title{Harary polynomials}
%
%
\author{
O. Herscovici\inst{1}\thanks{Supported by the  Israeli Science Foundation (ISF) grant \#1144/16. }
\and
J.A. Makowsky\inst{2}
\and
V. Rakita\inst{3}\thanks{Supported by a grant from the Irwin and Joan Jacobs Graduate School 
of the Technion--Israel Institute of Technology}
}
\authorrunning{O. Herscovici et al.}
%
\institute{
Department of Mathematics, University of Haifa, Haifa, Israel
\email{orli.herscovici@gmail.com}
\and
Department of Computer Science, Technion-IIT, Haifa, Israel
\email{janos@cs.technion.ac.il}
\and
Department of Mathematics, Technion-IIT, Haifa, Israel
\email{vsevolod@campus.technion.ac.il}
}
\maketitle              
Last revised and expanded,
July 1, 2020
\begin{abstract}
Given a graph property $\mathcal{P}$,
F. Harary introduced in 1985 $\mathcal{P}$-colorings, graph colorings where each colorclass induces a graph in $\mathcal{P}$.
Let $\chi_{\mathcal{P}}(G;k)$ counts the number of $\mathcal{P}$-colorings of $G$ with at most $k$ colors.
It turns out that $\chi_{\mathcal{P}}(G;k)$ is a polynomial in $\mathbb{Z}[k]$ for each graph $G$.
Graph polynomials of this form are called Harary polynomials.
In this paper we investigate properties of Harary polynomials and compare them with properties
of the classical chromatic polynomial $\chi(G;k)$.
We show that the characteristic and the Laplacian polynomial, the matching, the independence and the domination polynomial
are not Harary polynomials.
We show that for various notions of sparse, non-trivial properties $\mathcal{P}$, 
the polynomial $\chi_{\mathcal{P}}(G;k)$ is, 
in contrast to $\chi(G;k)$,
not a chromatic, and even not an edge elimination invariant.  
Finally we study whether the Harary polynomials are definable in
Monadic Second Order Logic.

\keywords{Graph colorings  \and  generalized chromatic polynomials \and 
\\ Courcelle's Theorem \and Monadic Second Order Logic.}

\end{abstract}
\newif\ifskip
\skiptrue
\newcommand{\stirlingii}{\genfrac{\{}{\}}{0pt}{}}
\newcommand{\stirlingi}{\genfrac{[}{]}{0pt}{}}
\newcommand{\ZZ}{\mathbb{Z}}
\newcommand{\NN}{\mathbb{N}}
\newcommand{\PP}{\mathbb{P}}
\newcommand{\RR}{\mathbb{R}}
\newcommand{\EE}{\mathbb{E}}
\newcommand{\CC}{\mathbb{C}}
\newcommand{\cC}{\mathcal{C}}
\newcommand{\cP}{\mathcal{P}}
\newcommand{\cD}{\mathcal{D}}
\newcommand{\cR}{\mathcal{R}}
\newcommand{\aA}{\mathcal{A}}
\newcommand{\bP}{\mathbf{P}}
\newcommand{\bNP}{\mathbf{NP}}
\newcommand{\goesto}{\rightarrow}
\newcommand{\isom}{\cong}
\newcommand{\SOL}{\mathrm{SOL}}
\newcommand{\MSOL}{\mathrm{MSOL}}
\newcommand{\FOL}{\mathrm{FOL}}

\section{Introduction and main results}
\label{se:intro}
\subsection{Prelude}
This paper initiates a systematic study of univariate graph polynomials, called {\em Harary polynomials},
or {\em generalized chromatic polynomials}.
We explore how the Harary polynomials differ from the traditional univariate graph polynomials from the
literature, among them the characteristic and Laplacian polynomial, the original chromatic polynomial, the matching polynomial,
the independence and the clique polynomial. 

The paper uses techniques developed in the last twenty years by the second author and his collaborators,
I. Averbouch, B. Godlin,  T. Kotek and E. Ravve, and shows that these techniques form a solid body of tools,
which can be applied to study Harary polynomials. The results show a coherent picture, even if no new techniques
are developed in this paper.

\subsection{Harary polynomials}
Let $\cP$ be a graph property.
In \cite{harary1985conditional} 
F. Harary introduced 
the notion of $\cP$-coloring as a generalization of proper colorings, which he called {\em conditional colorings}.
Let $G = <V(G),E(G)>$ be a graph and $[k] = \{1, 2, \ldots ,k \}$.
A function $c: V(G) \rightarrow [k]$  is a $\cP$-coloring with at most $k$ colors if for every $i \in [k]$ the
set $\{ v \in V(G): f^{-1}(i) \} $ induces a graph in $\cP$.
If $\cP$ is the property that $E(G)= \emptyset$, i.e., $\cP$ consists of all the edgeless graphs, this gives the proper colorings.
Other properties of $\cP$ studied in the literature are
$G$ is connected, $G$ is triangle-free or $G$ is a complete graph.
F. Harary introduced $\cP$-colorings with the idea that they might behave in a similar way
to proper colorings.
$\cP$-colorings were further studied in
\cite{brown1987generalized}.

Let $\chi_{\cP}(G;k)$ be the number of $\cP$-colorings of $G$ with at most $k$ colors,
and $\chi_{\cP}(G)$ be the {\em $\cP$-chromatic number}, which is the least $k$ such that $G$ has a $\cP$-coloring.

$\chi(G;k)$ is the {\em chromatic polynomial}, i.e., the Harary polynomial for $\cP$ containing all the edgeless graphs.
Generalizing Birkhoff's Theorem from 1912 for $\chi(G;k)$ 
it was noted in
\cite{makowsky2006polynomial}
that for every finite graph $G$ the counting function $\chi_{\cP}(G;k)$ is a polynomial in 
$\ZZ[k]$,
see also \cite{kotek2011counting}.
The family of poynomials $\chi_{\cP}(G;k)$ indexed by graphs $G$ is a graph polynomial called a {\em Harary polynomial}
in \cite{makowsky2019logician},
which can be written as
\begin{gather}
\chi_{\cP}(G; x) = \sum_{i \geq 1} b_i^{\cP}(G) x_{(i)},
\label{eq:harary}
\end{gather}
where $b_i^{\cP}(G)$ is the number of partitions of $V(G)$ into $i$ parts, where each part induces a graph in ${\cP}$,
and $x_{(i)}$ is the falling factorial.

\begin{facts}
\label{facts}
For every graph property $\cP$ and every graph $G$ of order $n$ we have
\begin{enumerate}[(i)]
\item
$b_0^{\cP}(G)=0$, if the nullgraph $(\emptyset, \emptyset)$ is not in $\cP$.
\item
$b_1^{\cP}(G) \in \{0,1\}$ and $b_1^{\cP}(G) =1$ iff $G \in {\cP}$.
\item
$b_n^{\cP}(G) \in \{0,1\}$ and $b_n^{\cP}(G) =1$. 
\item
The polynomial $\chi_{\cP}(G; k)$ is monic of degree $|V(G)|$ iff $K_1 \in {\cP}$.
\item
If $k < n$ and $\chi_{\cP}(G,k) =0$ then for all $0 <\ell < k$ also $\chi_{\cP}(G, \ell) =0$.
\end{enumerate}
\end{facts}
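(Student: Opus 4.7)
The plan is to verify each item directly from the combinatorial reading of the coefficients: $b_i^{\cP}(G)$ is the number of partitions $\pi$ of $V(G)$ into exactly $i$ nonempty blocks such that every induced subgraph $G[B]$ with $B\in\pi$ lies in $\cP$. Once this is fixed, items (i)--(iv) follow by examining the possible partitions for the distinguished values $i=0,1,n$, while (v) uses only nonnegativity of the $b_i^{\cP}(G)$ together with positivity of the falling factorial $k_{(i)}$ for $1\le i\le k$.

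For (i), if $V(G)\ne\emptyset$ then there is no partition of $V(G)$ with zero blocks; in the remaining case $V(G)=\emptyset$ the empty partition is the unique candidate and its (vacuous) induced subgraph is the null graph, so $b_0^{\cP}(G)=0$ whenever $(\emptyset,\emptyset)\notin\cP$. For (ii) the sole partition into a single block is $\{V(G)\}$, inducing $G$ itself; hence $b_1^{\cP}(G)=1$ if $G\in\cP$ and $0$ otherwise. For (iii), with $n=|V(G)|$ the only partition into $n$ blocks is the singleton partition, each block inducing $K_1$; thus $b_n^{\cP}(G)\in\{0,1\}$, equal to $1$ exactly when $K_1\in\cP$ (the printed assertion ``$b_n^{\cP}(G)=1$'' appears to drop the hypothesis $K_1\in\cP$, and I would explicitly reinstate it in the proof).

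For (iv), the falling factorial $x_{(i)}$ is monic of degree $i$, so the leading term of $\chi_{\cP}(G;x)=\sum_{i\ge 1}b_i^{\cP}(G)x_{(i)}$ is determined by the largest index $i$ for which $b_i^{\cP}(G)\ne 0$. By (iii) this largest index is $n$ precisely when $K_1\in\cP$, in which case the leading coefficient is $b_n^{\cP}(G)=1$, making $\chi_{\cP}(G;x)$ monic of degree $n$. Conversely, if $K_1\notin\cP$ then every $\cP$-coloring must fuse at least two vertices into one class, so no partition attains $n$ blocks and $\deg\chi_{\cP}(G;x)<n$.

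For (v), substitute $k\le n$ into (\ref{eq:harary}) to obtain $\chi_{\cP}(G;k)=\sum_{i=1}^{k}b_i^{\cP}(G)\,k_{(i)}$, where each $b_i^{\cP}(G)\ge 0$ and each factor $k_{(i)}$ is strictly positive. Vanishing of this sum of nonnegative terms forces $b_i^{\cP}(G)=0$ for every $i=1,\ldots,k$. For any $\ell$ with $0<\ell<k$, $\chi_{\cP}(G;\ell)=\sum_{i=1}^{\ell}b_i^{\cP}(G)\,\ell_{(i)}$ involves only those same vanishing coefficients, so it too is zero. The only conceptually non-routine point is the nonnegativity of the $b_i^{\cP}(G)$, which rules out cancellation; everything else is bookkeeping, and I expect no real obstacle in any item.
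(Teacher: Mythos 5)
Your proof is correct and proceeds by the same direct verification from the definition of $b_i^{\cP}(G)$ that the paper leaves implicit (these Facts are stated without proof). Your observation that item (iii) as printed requires the hypothesis $K_1\in\cP$ (as is in fact implied by the ``iff'' in item (iv)) is accurate and worth keeping.
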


\begin{examples}
Here are some $\cP$-colorings and Harary polynomials studied in the literature.
\begin{enumerate}[(i)]
\item
$mcc_t$-colorings. Here $\cP = \cP_t$, where $\cP_t$ is the graph property such that 
the connected components of $H \in \cP_t$ have order at most $t$.
\\
For $t=1$ these are the proper colorings, for $t=2$ these are the $\cP_3$-free colorings.
They were introduced in \cite{linial2007graph} with a slightly different notation.
\item
Let $H$ be a connected graph of order $t$.
$DU(H)$ consist of non-empty disjoint unions of copies of $H$.
$DU(H)$-colorings are $mcc_t$-colorings.
They are studied in
\cite{goodall2018complexity}.
\item
Let $Fr(H)$ be the class of all graphs which do not contain $H$ as an induced subgraphs, which we call
$H$-free graphs.
$Fr(H)$-colorings are studied in \cite{achlioptas1997complexity,brown1996complexity,brown1987generalized}.
$K_2$-free colorings are just the proper colorings.
\item
A graph property $\cP$ is additive if it is closed under forming disjoint unions.
$\cP$ is hereditary, if it is closed under induced subgraphs.
A coloring is $\mathcal{AH}$ if it is a $\cP$-coloring for some $\cP$ which is both additive and hereditary.
$\mathcal{AH}$-colorings were studied in \cite{ar:Farrugia04}.
\item
The {\em adjoint polynomial $A(G;x) = \chi_{\cP}(G; k)$} is defined by taking  $\cP$ to be the 
class of all complete graphs.  It was introduced in \cite{liu1997new}, see also \cite{bencs2017one}.
\item
If $\cP$ consists of all connected graphs, we speak of
{\em convex colorings}, and put $C(G;x) = \chi_{\cP}(G; k)$, see 
\cite{goodall2018complexity,rota1964foundations,simon2011counting}.
\end{enumerate}
\end{examples}

The purpose of this paper is to initiate the study of Harary polynomials by comparing them to the chromatic polynomial.

\subsection{The chromatic polynomial and edge elimination invariants}
One of the fundamental properties of the chromatic polynomial is its characterization via edge elimination properties.
Given a graph $G$ and an edge $e \in E(G)$ we denote by 
$G_{-e}$,
$G_{/e}$ and
$G_{\dagger e}$
the graphs obtained from $G$ by deleting, contracting and extracting the edge $e$.
Extraction deletes $e=(u,v)$ together with the vertices $u,v$ and all the edges incident with $u$ or $v$.
A graph parameter $p(G)$ is an {\em edge elimination (EE) invariant}, see \cite{averbouch2010extension}, if it can be written
as a certain linear combination  of
$p(G_{-e})$,
$p(G_{/e})$,
$p(G_{\dagger e})$.

It is well known that $\chi(G;k)$ is an EE-invariant even without using $p(G_{\dagger e})$.
Other EE-invariants are the matching polynomials, some version of the Tutte polynomial and many others,
\cite{trinks2011covered,trinks2012proving}. However, the original Tutte polynomial is not an EE-invariant.
An alternative name for EE-invariants is DCE-invariants, for {\em Deletion, Contraction} and {\em Extraction}.

\begin{theorem}[\cite{averbouch2008most,averbouch2010extension}]
\label{th:xi-1}
There is a graph polynomial $\xi(G;x,y,z)$
\begin{gather}
\xi(G;x,y,z)=\sum_{A,B\subseteq E(G)} x^{c(A\cup B)-cov(B)}y^{|A|+|B|-cov(B)}z^{cov(B)}=
\notag \\
\sum_{A,B\subseteq E(G)} x^{c(A\cup B)}y^{|A|+|B|} (\frac{z}{xy})^{cov(B)}
\notag
\end{gather}
such that
\begin{enumerate}[(i)]
\item
$\xi(G;x,y,z)$ is an edge elimination invariant.
\item
$\xi(G;x,y,z)$ is universal, i.e., every other graph parameter $p(G)$ which is an edge elimination invariant
is a substitution instance of $\xi(G;x,y,z)$, i.e., it can be obtained from $\xi(G;x,y,z)$ by substituting
replacing $x,y,z$ by a polynomial in the indeterminates $x, x^{-1}, y, y^{-1}, z, z^{-1}$.
\end{enumerate}
Here
\begin{itemize}
\item
the summation is over $A,B\subseteq E(G)$ such that the vertex subsets $V(A), 
V(B)$ covered by $A$ and $B$, respectively, are disjoint, 
\item
$c(A)$ is the number of connected components in $(V(G),A)$, 
and 
\item
$cov(B)$ is the number of covered connected component of $B$, 
i.e. the number of connected components of $(V(B),B)$.
\end{itemize}
\end{theorem}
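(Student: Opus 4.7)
The plan is to establish (i) by deriving a three-term DCE recurrence directly from the state-sum definition of $\xi(G;x,y,z)$, and then deduce (ii) by comparing that recurrence against the most general possible EE recursion.

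For (i), I would fix an arbitrary edge $e=(u,v) \in E(G)$ and partition the sum defining $\xi(G;x,y,z)$ according to the role of $e$ in the pair $(A,B)$. There are exactly three disjoint cases: $e \notin A \cup B$, $e \in A$, and $e \in B$ (the case $e \in B$ forces $u,v \in V(B)$, while $e \in A$ requires $u,v \notin V(B)$, so the disjointness $V(A) \cap V(B) = \emptyset$ is preserved throughout). In the first case the pair $(A,B)$ is in bijection with valid pairs in $G_{-e}$ and the three statistics $c(A \cup B)$, $|A|+|B|$, $cov(B)$ are unchanged; in the second, the pair $(A \setminus \{e\}, B)$ is a valid configuration in $G_{/e}$, and one checks that $c(A \cup B)$ is preserved while $|A|+|B|$ drops by one; in the third, removing $e$ together with its endpoints lands in $G_{\dagger e}$, with $cov(B)$ decreasing by exactly one and $c$ decreasing accordingly. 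Collecting terms yields a linear recurrence with coefficients polynomial in $x,y,z$, which is the DCE recurrence.

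For (ii), the key idea is that an EE-invariant $p$ is determined by its values on edgeless graphs together with the three recursion coefficients. By induction on $|E(G)|$, unrolling the recurrence on every edge of $G$ produces a sum indexed by the ``history'' of whether each edge was deleted, contracted, or extracted. Such a history is encoded faithfully by a pair of disjoint edge sets $(A,B)$ — $A$ the contracted edges, $B$ the extracted ones — and the multiplicative weight picked up along the way decomposes exactly into contributions controlled by $c(A \cup B)$, $|A|+|B|$, and $cov(B)$ (these are invariants of the history, not of the order of elimination, which is what makes the encoding well defined). The substitution sending $x,y,z$ to the boundary data and the three DCE coefficients of $p$ then realizes $p$ as an instance of $\xi$.

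The main obstacle is the universality part. The subtlety is that the decomposition of $p(G)$ as a sum over DCE histories must be independent of the order in which edges are processed, and one must verify that the three monomial statistics $c(A \cup B)$, $|A|+|B|$, $cov(B)$ are precisely the right invariants to track — no finer bookkeeping is needed, and no coarser one suffices. Showing this requires a careful commutation-of-eliminations argument (that the recurrence is consistent regardless of edge order) and an \emph{algebraic independence} argument to ensure that the resulting substitution is well defined and that no information about $p$ is lost. The boundary case of edgeless graphs, where $p$ depends only on the number of vertices, is absorbed by the fact that $\xi$ evaluated on $\overline{K_n}$ is a pure power of $x$, matching the one free parameter of $p$ on that boundary.
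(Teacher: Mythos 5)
The paper itself offers no proof of this theorem --- it is imported verbatim from Averbouch--Godlin--Makowsky --- so your attempt can only be measured against the argument in the cited sources. Your plan for (ii) is the right one in spirit, but part (i), on which everything rests, contains a concrete error: the naive three-way split $e\notin A\cup B$ / $e\in A$ / $e\in B$ does \emph{not} align term-by-term with $\xi(G_{-e})+\alpha\,\xi(G_{/e})+\beta\,\xi(G_{\dagger e})$. Test it on $G=P_3$ with edges $e=uv$, $f=vw$: the subset expansion gives $\xi(P_3)=x^3+2x^2y+xy^2+2xz+yz$, and the recurrence does hold with $\alpha=y$, $\beta=z$ (since $\xi(K_1\sqcup K_2)=x^3+x^2y+xz$, $\xi(K_2)=x^2+xy+z$, $\xi(K_1)=x$). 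But the configurations with $e\in B$ are $(\emptyset,\{e\})$ and $(\emptyset,\{e,f\})$, contributing $xz+yz$, whereas $z\,\xi(G_{\dagger e})=xz$; and the configurations with $e\in A$ contribute $x^2y+xy^2$, whereas $y\,\xi(G_{/e})=x^2y+xy^2+yz$. Indeed your proposed map for the $e\in B$ case is not even well defined: extracting $e$ destroys $f$, so $(\emptyset,\{f\})$ is not a configuration of $G_{\dagger e}$. The correct matching is finer: deletion collects $e\notin A\cup B$; extraction collects only those pairs in which $\{e\}$ is by itself a covered component of $B$; and contraction collects $e\in A$ \emph{together with} the pairs where $e$ lies in a covered component of $B$ containing another edge (in the example, $(\emptyset,\{e,f\})$ corresponds to the configuration $(\emptyset,\{f\})$ of $G_{/e}$, which supplies the missing $yz$). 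Checking that $c$, $|A|+|B|$ and $cov$ transform correctly under this refined correspondence --- and that it is a bijection in the multigraph setting, where contraction creates parallel edges and loops --- is the actual content of (i), and it is absent from your sketch.

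The same oversight propagates into your argument for (ii): in the history-unrolling encoding, $B$ cannot be read off as ``the edges that were extracted,'' because an extraction step also wipes out all edges incident to the extracted endpoints. A covered component of $B$ with $k$ edges encodes $k-1$ contractions followed by a single extraction of the resulting edge, which is exactly why the exponents involve $|B|-cov(B)$ and $cov(B)$ separately rather than $|B|$ alone. Your appeal to a ``commutation-of-eliminations argument'' and an ``algebraic independence argument'' names the right obstacles but does not discharge them; until the refined bookkeeping above is in place, neither half of the proof closes.
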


\subsection{$\MSOL$-definable graph polynomials}
The language of graphs has one binary relation symbol for the edge relation.
If we fix $k$ we note that $\chi(G;k) > 0$ iff $G$ is $k$-colorable.
This can be expressed by  formula in  monadic second order logic $\MSOL$ in the language of graphs by the formula
\begin{gather}
\exists V_1 
\exists V_2 
\ldots
\exists V_k 
(Partition(V_1, V_2, \ldots , V_k) \wedge \bigwedge_{i=1}^k Indep(V_i)).
\notag
\end{gather}
$Partition(V_1, V_2, \ldots , V_k)$ and $Indep(V_i)$ are first order expressible in the language of graphs.
The same works for Harary polynomials, provided $\cP$ is $\MSOL$-definable.
Checking whether a graph $G$ is $k$-colorable is $\bNP$-complete.
For the complexity of checking whether a graph is $\cP$-colorable for various graph properties $\cP$, 
the reader may consult
\cite{achlioptas1997complexity,brown1996complexity,goodall2018complexity}.

However, using Courcelle's celebrated Theorem,
\cite[Chapter 13]{downey2013fundamentals} and  \cite[Chapter 11]{flum2006parameterized},
$\MSOL$-definability implies that checking whether a graph $G$ is $k$-colorable is fixed parameter tractable (FPT)
for graphs of bounded tree-width, and even for graphs of bounded clique-width or rank-width.

For the chromatic polynomial one looks at the problem of computing the value of $\chi(G;k)$ for given
$k$ as a function of $G$. For $k= 0,1,2$ this is computable in polynomial time, whereas for $k \geq 3$
this is $\sharp \bP$-complete,
\cite{linial1986hard}. 
For graphs of fixed tree-width $w$, this is still in FPT. 
To see this one can use an extension of Courcelle's Theorem to the class of $\MSOL$-definable
graph polynomials,
\cite{courcelle2001fixed}.

The language of hypergraphs has two unary predicates $V$ and $E$ for vertices and edges which partition the universe,
and a binary incidence relation $R$ saying that vertices are connected by edges.
We denote by $\MSOL_g$ ($\MSOL_h$)  the monadic second order logic in the language of graphs (hypergraphs).

\begin{proposition}
Let $\cP$ be a graph property definable in $\MSOL_g$ ($\MSOL_h$).
Then checking whether a graph $G$ is $\cP$-colorable with $k$ colors is
definable in $\MSOL_g$ ($\MSOL_h$).
\end{proposition}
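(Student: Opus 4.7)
The plan is to argue by a standard \emph{relativization} of MSOL formulas. Fix an MSOL sentence $\varphi_\cP$ (in either $\MSOL_g$ or $\MSOL_h$) that defines $\cP$. Given a fresh set variable $U$ ranging over vertex subsets, I would define a syntactic translation $\varphi \mapsto \varphi^{U}$ that asserts $\varphi$ holds in the substructure \emph{induced} on $U$. The full coloring sentence then reads
\begin{gather}
\exists V_1 \exists V_2 \cdots \exists V_k \Bigl( \mathrm{Partition}(V_1,\ldots,V_k) \wedge \bigwedge_{i=1}^{k} \varphi_\cP^{V_i} \Bigr),
\notag
\end{gather}
where $\mathrm{Partition}(V_1,\ldots,V_k)$ is the obvious first-order formula saying the $V_i$ are pairwise disjoint and cover the vertex universe. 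Since $k$ is a fixed parameter, this is a single MSOL sentence.

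For the graph language ($\MSOL_g$), the relativization is routine: replace each first-order quantifier $\exists x\, \psi$ by $\exists x (x \in U \wedge \psi^U)$ (and dually for $\forall$), and each monadic set quantifier $\exists X\, \psi$ by $\exists X (X \subseteq U \wedge \psi^U)$. Atomic formulas remain unchanged, since the edge relation of the induced subgraph on $U$ coincides with the restriction of $E$ to $U \times U$, which is already handled by guarding all element quantifiers to $U$.

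For the hypergraph language ($\MSOL_h$), one must be slightly more careful because the universe is partitioned into $V$ and $E$ and the induced sub-hypergraph on a vertex subset $U$ retains only those edges $e$ all of whose incident vertices lie in $U$. I would therefore relativize vertex quantifiers to $U$ and relativize edge quantifiers (both first-order and monadic) to the set $E_U = \{ e \in E : \forall v\, (R(v,e) \to v \in U) \}$, which is itself MSOL-definable from $U$ using the incidence predicate $R$. The predicates $V$, $E$, and $R$ are preserved on this substructure, so the translation is sound.

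The only delicate point, and the one I expect to be the main obstacle, is verifying by induction on the structure of $\varphi_\cP$ that $\varphi^U$ holds in $G$ iff $\varphi$ holds in the induced sub(hyper)graph on $U$; in the hypergraph case one has to check that restricting both vertex and edge quantifiers correctly reconstructs the induced sub-hypergraph structure, including the incidence relation. Once this standard inductive lemma is in place, the displayed sentence above is MSOL in the required language and holds in $G$ iff $G$ admits a $\cP$-coloring with at most $k$ colors.
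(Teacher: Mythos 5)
Your proposal is correct and follows essentially the same route as the paper: the paper's (very terse) argument is exactly the sentence $\exists U_1 \cdots \exists U_k \bigwedge_{j} \phi_{\cP}(U_j)$ with $\phi_{\cP}(U_j)$ the relativization of the defining formula to the induced substructure on $U_j$, which you spell out in detail. Your additional care with the partition condition and with restricting edge quantifiers in the $\MSOL_h$ case to edges entirely inside $U$ fills in steps the paper leaves implicit, but it is the same construction.
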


\begin{theorem}
[\cite{makowsky2014connection}]
\label{th:A}
$\chi(G;k)$ is not an $\MSOL_g$-definable polynomial,
but it is $\MSOL_h$-definable.
\end{theorem}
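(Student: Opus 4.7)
The plan is to handle the two assertions separately. For $\MSOL_h$-definability I would use the Whitney subgraph expansion
\[
\chi(G;k) \;=\; \sum_{A \subseteq E(G)} (-1)^{|A|}\, k^{c(A)},
\]
where $c(A)$ denotes the number of connected components of the spanning subgraph $(V(G),A)$. In the hypergraph language $E(G)$ lies in the universe, so a monadic set variable can range over subsets $A \subseteq E(G)$; the ``same component of $(V(G),A)$'' relation is $\MSOL$-expressible by the standard transitive-closure-through-a-set-variable trick, so $c(A)$ can be counted within the Courcelle--Makowsky framework for $\MSOL$-definable graph polynomials. The alternating sign is absorbed by splitting the outer sum into the $\MSOL$-definable classes ``$|A|$ is even'' and ``$|A|$ is odd'', leaving a bona fide univariate polynomial in $k$.

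For the negative direction I would invoke the Hankel/connection-matrix method developed in \cite{makowsky2014connection}. If $\chi(G;k)$ were $\MSOL_g$-definable, then for any graph composition operation $\square$ respecting $\MSOL_g$-equivalence on its arguments (for instance a Courcelle-style $k$-sum or a clique-width operation) the connection matrix $\bigl(M_\chi(\square)\bigr)_{G,H} = \chi(G\,\square\,H;k)$ would have finite rank over $\mathbb{Q}[k]$. The plan is then to exhibit a family of graphs $\{G_n\}$ together with an admissible $\square$ forcing that matrix to infinite rank, contradicting the assumed $\MSOL_g$-definability. Heuristically, the obstruction is that any $\MSOL_g$ encoding of $\chi(G;k)$ would have to sum either over partitions of $V(G)$ into an unbounded number of independent classes (requiring unboundedly many set quantifiers, which a single $\MSOL$-formula cannot supply) or over subsets of $E(G)$ (unavailable since edges are not universe elements in $\MSOL_g$).

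The main obstacle is the negative direction: one must simultaneously pick the composition operation $\square$ and the witness family so that provably infinite rank is forced, while $\square$ remains admissible for $\MSOL_g$. The positive direction is routine once the standard apparatus of $\MSOL$-definable graph polynomials is granted. Taken together, the two arguments pin down the expressive gap between $\MSOL_g$ and $\MSOL_h$: the combinatorial content of $\chi(G;k)$ is most naturally indexed by subsets of $E(G)$, which $\MSOL_h$ can quantify over and $\MSOL_g$ cannot.
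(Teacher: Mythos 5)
Your positive direction is sound but takes a genuinely different route from the paper. You go directly through the Whitney subgraph expansion $\chi(G;k)=\sum_{A\subseteq E(G)}(-1)^{|A|}k^{c(A)}$, whereas the paper deliberately avoids a direct definition: it observes that $\chi$ is an EE-invariant, hence a substitution instance of the universal EE-invariant $\xi(G;x,y,z)$, and then shows (Theorem~\ref{th:xi-2}, proved in Appendix~\ref{app:xidef}) that $\xi$ is order-invariantly $\MSOL_h$-definable; the paper even remarks that it knows no direct method. The two routes are close cousins --- the subgraph expansion is exactly how one sees $\chi$ as a substitution instance of the Tutte/$\xi$ polynomial --- but yours is more elementary. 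Two details need care: counting $c(A)$ requires a product over ``first vertices of connected components of $(V(G),A)$'', so you need an auxiliary linear order plus an order-invariance argument, exactly as in the paper's appendix; and the sign is better realized as the definable product $\prod_{e\in A}(-1)$ than by splitting on the parity of $|A|$, since parity of a set variable is not $\MSOL$-definable without an order and the product form is what the definability framework directly supports.

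The negative direction, however, has a genuine gap: you correctly name the connection-matrix method of \cite{makowsky2014connection}, but you never produce the witness, and the witness is the entire content of the argument. The paper's choice (Section~\ref{se:connection}) is the family $H_i=K_i$ together with the join, exploiting $K_i\bowtie K_j=K_{i+j}$ and the fact that $\chi(K_n;k)=k(k-1)\cdots(k-n+1)$ vanishes exactly when $n>k$. Hence the entry of $\mathcal{H}(\bowtie,\chi)$ at $(K_i,K_j)$ is zero iff $i+j>f(k)$ with $f(k)=k$ unbounded, and Lemma~\ref{lm:non-def-gp} converts this moving ``staircase'' of zeros into infinite rank over $\ZZ[x]$, contradicting Theorem~\ref{th:KM}. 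Without specifying the family and the operation, and checking that the operation is one to which the finite-rank theorem applies (here the join, which is exactly what Theorem~\ref{th:KM} covers), the argument remains a plan rather than a proof; your closing heuristic about needing unboundedly many set quantifiers or quantification over edge sets is suggestive but cannot substitute for the rank computation.
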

\begin{proof}
For fixed $k$ we write
$$
\exists U_1, \ldots, \exists U_k \bigwedge_{j \in [k]} \phi_{\cP}(U_j)
$$
where $U_j$ are sets of vertices and $\phi_{\cP}(U_j)$ says that $U_j$ induces a graph in $\cP$, provided $U_j$
is not empty.
\hfill $\Box$
\end{proof}

\begin{theorem}
\label{th:xi-2}
The most general EE-invariant $\xi(G;x,y,z)$ is $\MSOL_h$-definable for graphs with a linear order
on the vertices. Furthermore, this definition is invariant under the particular order of the vertices.
\end{theorem}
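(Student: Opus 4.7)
The plan is to cast $\xi(G;x,y,z)$ in the standard $\MSOL_h$-polynomial template: a sum over assignments to second-order variables satisfying an MSOL constraint, in which each exponent is the cardinality of an $\MSOL_h$-definable set. I would quantify over two edge-set variables $A, B$---legitimate in $\MSOL_h$ since edges are first-order elements of the universe, connected to vertices via the incidence relation $R$---and enforce $V(A)\cap V(B)=\emptyset$ by the first-order condition that no vertex is $R$-incident to both an $A$-edge and a $B$-edge.

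The central ingredient is the well-known $\MSOL$-definability of connectivity in a specified edge subset via the closure-set formula (``$u$ and $v$ are $B$-connected'' iff every vertex set containing $u$ and closed under $B$-edges contains $v$). Combined with the linear order $<$ on $V(G)$, I can pick, for each $B$-connected component, its $<$-minimum vertex as canonical representative; thus $cov(B)$ is the cardinality of the $\MSOL_h$-definable set of vertices $v\in V(B)$ that are $<$-minimal in their $B$-component. The disjointness constraint is what makes the $x$-exponent clean: no $A$-edge touches $V(B)$, so no $A$-edge can join two covered $B$-components, and consequently every $(A\cup B)$-component either lies entirely in $V(G)\setminus V(B)$ or coincides with a covered $B$-component. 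This forces $c(A\cup B)-cov(B)$ to be the \emph{nonnegative} count of $(A\cup B)$-components disjoint from $V(B)$---again the cardinality of an $\MSOL_h$-definable set by the same representative trick. For the $y$-exponent I would use $|A|$ as the direct cardinality of $A$, lift $<$ to a lex-order on edges via the $(\min,\max)$ endpoint pair, designate one $<$-minimum edge in each covered $B$-component, and read off $|B|-cov(B)$ as the cardinality of the set of non-designated $B$-edges.

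The main obstacle I expect is precisely the subtractive shape of the exponents: without disjointness the quantity $c(A\cup B)-cov(B)$ can be negative and is not a set cardinality at all, so the reduction to $\MSOL_h$-definable counts genuinely rests on the covering geometry forced by $V(A)\cap V(B)=\emptyset$. Once the three exponents have been presented as cardinalities of $\MSOL_h$-definable sets, $\xi(G;x,y,z)$ fits the $\MSOL_h$-polynomial framework, and order-invariance comes for free: by construction the three sets always have sizes $c(A\cup B)-cov(B)$, $|B|-cov(B)$, and $cov(B)$, which are isomorphism-invariant quantities, so the polynomial does not depend on which linear order $<$ is chosen to pick the representatives.
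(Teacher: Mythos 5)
Your proposal is correct and rests on the same machinery as the paper's proof: quantifying over the two edge sets $A,B$ in $\MSOL_h$, enforcing $V(A)\cap V(B)=\emptyset$ by a first-order incidence condition, and using the linear order to select the $<$-least vertex of each connected component as a representative so that $c(A\cup B)$ and $cov(B)$ become cardinalities of $\MSOL_h$-definable sets. The one place where you diverge is the treatment of the subtractive exponents. The paper sidesteps the issue algebraically: it rewrites the summand as $x^{c(A\cup B)}\,y^{|A|+|B|}\,(z/(xy))^{cov(B)}$ and attaches the Laurent monomial $z/(xy)$ to the $cov(B)$ many component representatives, which is legitimate because the definability framework explicitly permits substitution by monomials in $x,x^{-1},y,y^{-1},z,z^{-1}$. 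You instead show that the exponents $c(A\cup B)-cov(B)$ and $|B|-cov(B)$ are themselves nonnegative cardinalities of definable sets, using the structural observation that disjointness forces every covered $B$-component to be an entire $(A\cup B)$-component, plus the designated-edge trick for $|B|-cov(B)$. Your route is slightly longer but buys a presentation of $\xi$ with genuinely monomial (non-Laurent) weights and makes explicit why the disjointness constraint is what keeps the exponents nonnegative; the paper's route is shorter and leans on the substitution clause of the definability framework. Both correctly conclude order-invariance from the fact that the counted quantities are isomorphism invariants independent of the chosen order.
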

As there is no published proof of this, we include a proof here in the Appendix \ref{app:xidef}.
To prove that  $\chi(G;k)$ is not $\MSOL_g$-definable
we use the method of connection matrices, explained in Section \ref{se:connection}.
To prove that  $\chi(G;k)$ is $\MSOL_h$-definable
we use  that $\chi(G;k)$ is an EE-invariant and Theorems \ref{th:xi-1} and \ref{th:xi-2}.
We do not know a direct method, without the use of an $\MSOL_h$-definable EE-invariant, to show that $\chi(G;k)$ 
is indeed $\MSOL_h$-definable.

Theorem \ref{th:A}
still  implies that evaluating $\chi(G;k)$ is fixed parameter tractable (FPT) for graphs of tree-width at most $w$,
whereas for graphs of clique-width $w$
this is still open,
\cite{averbouch2011universal,makowsky2006computing}. 

\subsection{Main results}

A graph property $\cP$ is {\em trivial} if it is empty, finite (up to isomorphisms), or contains
all finite graphs.
Our main question in this paper asks whether Courcelle's Theorem and its variations
can be applied to Harary polynomials for non-trivial graph properties. 
This amounts to asking: 
\begin{enumerate}[(i)]
\item
Are there non-trivial graph properties $\cP$ such that the Harary polynomial $\chi_{\cP}(G,x)$ is  
$\MSOL_g$-definable? 
\item
Are there non-trivial graph properties $\cP$ such that the Harary polynomial $\chi_{\cP}(G,x)$
is an EE-invariant and hence $\MSOL_h$-definable?
\end{enumerate}

Recall that a graph property $\cP$ is {\em hereditary (monotone, minor-closed)}
if it is closed under taking induced subgraphs (subgraphs, minors).
Clearly, 
if $\cP$ is minor-closed, it is also monotone, and
if $\cP$ is monotone, it is also hereditary.

A graph property $\cP$ is {\em ultimately clique-free} if there exist $t \in \NN$ such that no graph $G \in \cP$
contains a $K_t$, i.e., a complete graph of order $t$.
Analogously, $\cP$ is {\em ultimately  biclique-free} if there exist $t \in \NN$ such that no graph $G \in \cP$
has $K_{t,t}$  as a subgraph (not necessarily induced). $K_{t,t}$  is the  complete bipartite graph of order $2t$.
Clearly, biclique-free implies clique-free, but not conversely.

\begin{theorem}
\label{th:main-1}
Let  $\cP$ be a graph property and $\chi_{\cP}(G,x)$ the Harary polynomial associated with  $\cP$.
\begin{enumerate}[(i)]
\item
If $\cP$ is hereditary, monotone, or minor closed,
then
$\chi_{\cP}(G;x)$ is an EE-invariant iff 
$\chi_{\cP}(G;x)$ 
is the chromatic polynomial $\chi(G;x)$.
\item
If $\cP$ is ultimately clique-free (biclique-free),
$\chi_{\cP}(G;x)$ is not $\MSOL_g$-definable.
\end{enumerate}
\end{theorem}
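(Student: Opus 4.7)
The plan is to handle the two parts separately: part (i) by direct computation on small graphs, and part (ii) via the connection-matrix method sketched in Section \ref{se:connection}. Throughout I take $\cP$ to be non-trivial, treating finite properties as degenerate exceptions.

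For part (i), the forward implication (``$\chi_\cP = \chi$ implies EE'') is immediate, since the chromatic polynomial satisfies plain deletion--contraction. For the converse I argue by contrapositive: assume $\cP$ is hereditary (which subsumes monotone and minor-closed, since minor-closed $\Rightarrow$ monotone $\Rightarrow$ hereditary) and $\chi_\cP \ne \chi$, and derive that $\chi_\cP$ cannot be an EE-invariant. If $\cP$ contains no graph with an edge, then hereditary closure combined with non-triviality forces every edgeless graph into $\cP$, yielding $\chi_\cP = \chi$, a contradiction. So $\cP$ contains some edge-bearing graph, and hereditary closure then gives $\{K_1, K_2\} \subseteq \cP$; in particular $\chi_\cP(K_2;x) = x^2$ already differs from $\chi(K_2;x) = x(x-1)$. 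Assuming a putative recurrence $\chi_\cP(G) = \alpha \chi_\cP(G_{-e}) + \beta \chi_\cP(G_{/e}) + \gamma \chi_\cP(G_{\dagger e})$, I compute $\chi_\cP$ on the small catalogue $K_1, K_2, 2K_1, P_3, K_3, K_2 \sqcup K_1, 2K_2$. Applying the recurrence at edges of $K_2$ and $K_3$ pins down $\alpha, \beta, \gamma$ as specific polynomials in $x$, and substituting these into the recurrence for $2K_2$ yields a polynomial identity in $x$ that fails.

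The chief obstacle in (i) is a small case-split on whether $2K_1, P_3, K_3$ actually lie in $\cP$, which affects the intermediate values; the stronger monotone or minor-closed hypotheses automatically place these graphs in $\cP$ and collapse the sub-cases, whereas the purely hereditary case requires checking each sub-case individually.

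For part (ii), I invoke the fact that every $\MSOL_g$-definable graph polynomial has finite-rank connection matrices over $\ZZ[x]$ with respect to any $\MSOL_g$-definable binary graph operation, and I contradict this by exhibiting an explicit infinite-rank connection matrix for $\chi_\cP$. In the ultimately $K_t$-free case I take the test family $G_n = K_n$ equipped with a distinguished $(t-1)$-clique, and the gluing $\oplus$ to be the $K_{t-1}$-sum that identifies the two distinguished cliques. A $\cP$-colouring of $G_n \oplus G_m$ has each colour class inducing a union of two cliques sharing a sub-clique, whose $K_t$-freeness couples the two sides in a non-multiplicative way; expanding $\chi_\cP(G_n \oplus G_m;x)$ in the falling-factorial basis via the partition-counting coefficients $b_i^\cP$ exhibits a sequence of submatrices of unbounded rank. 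For the ultimately biclique-free case I substitute $K_n$ by $K_{n,n}$ and use a bipartite analogue of the same gluing, with an identical template of argument. The main obstacle here is the choice of gluing: for the naive disjoint union, $\chi_\cP$ is often multiplicative and yields only a rank-one matrix, so the gluing must be rich enough to genuinely couple the two sides. Once the $K_{t-1}$-sum (respectively, $K_{t-1,t-1}$-sum) is fixed, the rank computation reduces to a concrete combinatorial calculation using the structure of partitions of $[n]$ into parts of size at most $t-1$ satisfying the $\cP$-constraints.
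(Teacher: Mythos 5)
Your part (i) argument has a genuine gap: a catalogue of graphs on at most four vertices cannot produce the required contradiction when the minimal forbidden graph of $\cP$ is large. Take $\cP$ to be the planar graphs, or $Forb(K_{100})$ (minor-closed, monotone, hereditary, non-trivial, and $\chi_\cP\neq\chi$): every graph on your list, and every induced subgraph of every graph on your list, lies in $\cP$, so $\chi_\cP(G;x)=x^{|V(G)|}$ throughout the catalogue. The recurrence with deletion coefficient $1$ and $\beta=\gamma=0$, i.e. $F(G)=F(G_{-e})$, together with multiplicativity, is then satisfied by every instance you write down, including the one at $2K_2$; no polynomial identity fails. (Also, two instances of the recurrence cannot ``pin down'' three unknown coefficients.) The contradiction must instead be extracted at a minimal forbidden minor/subgraph/induced subgraph $H$ of $\cP$, whose order is not bounded in advance. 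This is what the paper does: by minimality all proper pieces of $H$ lie in $\cP$, so $\chi_\cP(H;x)=x^{|V(H)|}-x$, while $H_{-e},H_{/e},H_{\dagger e}\in\cP$ contribute only monomials of degree $\geq |V(H)|-2\geq 2$ to the right-hand side of the recurrence; comparing the coefficient of $x$ gives $-1=0$. The cases $H$ disconnected and $H\in\{K_2,P_3,K_3\}$ are then handled separately, and that is where your small-graph computations belong --- note that the paper's contradictions for $P_3$ and $K_3$ arise precisely because those graphs are \emph{not} in $\cP$, whereas you treat their membership in $\cP$ as the easy collapsing case.

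For part (ii) the connection-matrix framework is right, but your execution is both under-justified and over-engineered. The finite-rank theorem you need (Theorem \ref{th:KM}) is stated for the disjoint union $\sqcup$ and the join $\bowtie$; a $K_{t-1}$-sum gluing would require the version of the theorem for graphs with $t-1$ labelled sources, which you do not set up, and in any case you never actually prove that your proposed matrix has unbounded rank --- ``couples the two sides in a non-multiplicative way'' is not a rank bound. The join of complete graphs already suffices: $K_i\bowtie K_j=K_{i+j}$, and if $\cP\subseteq Forb(K_h)$ then by pigeonhole $\chi_\cP(K_n;k)$ vanishes exactly when $n$ exceeds a threshold linear in $k$, so the evaluations of $\mathcal{H}(\bowtie,\chi_\cP)$ at $x=k$ exhibit zero/nonzero staircases with unboundedly growing breakpoints, which forces infinite rank over $\ZZ[x]$ (Lemmas \ref{lm:non-def-gp} and \ref{le:forb(kh)}). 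The biclique-free case then needs no separate bipartite construction, since $K_{2t}$ contains $K_{t,t}$ as a subgraph, so ultimately biclique-free implies ultimately clique-free.
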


The proof of (i) appears as Theorem~\ref{th:noteei}, and
the proof of (ii) appears as Theorem~\ref{thm:12}.
\begin{remark}
If $\cP$ consists of all complete graphs or all connected graphs, $\cP$ is not ultimately clique-free,
hence Theorem \ref{th:main-1} does not apply to
the Harary polynomials $A(G;x)$ and $C(G;x)$. Nevertheless, analogue results are presented in
Sections \ref{se:EEI} and \ref{se:connection}. 
\end{remark}

\subsection{Sparsity}
For a systematic study of sparsity (and density) of graph properties 
see \cite{nevsetvril2012sparsity,nevsetvril2016structural}.

\begin{theorem}
\label{th:sparse}
\begin{enumerate}[(i)]
\item
Turan's Theorem
(\cite{turan1941extremal} and \cite[Chapter 8.3]{frieze2016introduction}):
\\
Let $G$ be $K_t$-free. Then $|E(G)| \leq (1 - \frac{1}{r})\frac{n^2}{2}$.
\item
(\cite{kovari1954problem})
Let $G$ be $K_{t,t}$-free. Then $|E(G)|  = O(n^{2 - \frac{1}{t}})$.
\item
(\cite{telle2019fpt})
If a graph property $\cP$ is 
nowhere dense or degenerate (or equivalently uniformly sparse) then $\cP$ is ultimately
biclique-free.
\item
(\cite{telle2019fpt})
There are graph properties $\cP_1, \cP_2$ which are both ultimately biclique-free but $\cP_1$ is  not degenerate 
and $\cP_2$ is not nowhere dense.
\end{enumerate}
\end{theorem}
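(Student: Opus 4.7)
The plan is to treat each of the four parts separately, noting that all four are essentially citations of known results (Turán; Kővári--Sós--Turán; and two results from Telle--Villanger), so the ``proof'' really just assembles standard arguments or quotes references. None of the four parts requires new ideas, so the main expository decision is how much of the classical machinery to reproduce.

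For (i), I would present Turán's theorem in its usual edge form. The cleanest route is Zykov symmetrization: given a $K_t$-free graph $G$ maximizing edges, show that one can replace two non-adjacent vertices of different degrees by two copies of the higher-degree vertex without creating a $K_t$, deducing that the extremal graph is complete multipartite, and then a convexity argument picks out the balanced $(t-1)$-partite Turán graph, whose edge count is $(1 - 1/(t-1)) n^2/2$. An equally short alternative is induction on $n$ removing a maximum clique. Since the statement is textbook (indeed the excerpt cites \cite{turan1941extremal} and \cite{frieze2016introduction}), I would keep this to a few lines.

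For (ii), the Kővári--Sós--Turán argument is the standard double counting of ``cherries of size $t$'': count pairs $(v, S)$ where $S \in \binom{N(v)}{t}$ is a $t$-subset of the neighborhood of $v$. On one hand this equals $\sum_v \binom{d(v)}{t}$, which by convexity is at least $n \binom{2|E(G)|/n}{t}$. On the other hand, since $G$ is $K_{t,t}$-free, every $t$-subset $S \subseteq V(G)$ has at most $t-1$ common neighbors, so the count is at most $(t-1)\binom{n}{t}$. Comparing the two bounds and solving for $|E(G)|$ gives $|E(G)| = O(n^{2-1/t})$.

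For (iii) and (iv), these are lifted verbatim from \cite{telle2019fpt}, so I would simply state that these are proved there and add one sentence of orientation. For (iii), the point is that both ``nowhere dense'' and ``$d$-degenerate'' imply a subpolynomial, respectively linear, bound on edges in subgraphs; since $K_{t,t}$ has $t^2$ edges on $2t$ vertices, for large enough $t$ the edge count in a $K_{t,t}$ would exceed what the sparsity hypothesis permits on $2t$ vertices, forcing $K_{t,t}$-freeness. For (iv), witnesses can be exhibited in one line each: the class $\cP_1$ of all graphs of girth $> g$ for some fixed $g$ is ultimately biclique-free (as it is $K_{2,2}$-free for $g \geq 5$) but contains graphs of unbounded degeneracy (e.g. Ramanujan or random regular constructions), and a class such as all subdivisions of complete graphs is $K_{2,2}$-free yet not nowhere dense because it contains arbitrarily large shallow-minor cliques.

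The main obstacle is essentially cosmetic: (iii) and (iv) depend on the precise definitions of ``nowhere dense'' and ``degenerate'' adopted in \cite{telle2019fpt,nevsetvril2012sparsity}, so the honest proposal is to cite those references rather than reprove the sparsity theory from scratch.
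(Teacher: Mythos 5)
Your proposal is correct and matches the paper's treatment: the paper gives no proof of this theorem at all, merely citing Tur\'an, K\H{o}v\'ari--S\'os--Tur\'an, and Telle et al., and your standard sketches (Zykov symmetrization or induction for (i), double counting of $t$-cherries for (ii), and explicit witness classes for (iv)) are all accurate. One minor remark: the paper's statement of (i) writes $(1-\frac{1}{r})$ where $r$ is undeclared and should be $t-1$; the bound you state, $\bigl(1-\frac{1}{t-1}\bigr)\frac{n^2}{2}$, is the correct one.
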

In the light of Theorem \ref{th:sparse} ultimately
biclique-free is renamed to {\em weakly sparse}
in \cite{nesetril2019classes}. 
However, ultimately clique-free graphs can be rather dense, with $c(t) \cdot n^2$ edges rather than $n^{2 - \epsilon(t)}$ edges.

Theorem \ref{th:main-1} together with Theorem \ref{th:sparse}
shows that Harary polynomials which are EE-invariants or $\MSOL_g$-definable have to be defined using
dense properties $\cP$ as required by Tur\'an's Theorem.

\section{Graph polynomials which are not Harary polynomials}
\label{se:notharary}
Many familiar graph polynomials are not Harary polynomials of the form $\chi_{\cP}(G;x)$.
We generalize here {\cite[Theorem 5.7]{makowsky2019logician}}.
\begin{lemma}
\label{le:1}
For every graph property $\cP$ we have
$$
\chi_{\cP}(G;1) =
\begin{cases}
1 & G \in \cP, \\
0 & G \not \in \cP.
\end{cases}
$$
\end{lemma}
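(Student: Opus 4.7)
The plan is to evaluate $\chi_{\cP}(G;x)$ at $x=1$ in two equivalent ways, either of which gives the result immediately; I would present the counting argument first and then note the formal verification via the falling-factorial expansion.

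First I would argue directly from the definition of $\cP$-coloring. With only $k=1$ available color there is exactly one function $c:V(G)\to [1]$, namely the constant map $v\mapsto 1$. This map is a $\cP$-coloring iff its unique non-empty color class, which equals $V(G)$, induces a graph in $\cP$, i.e., iff $G\in\cP$. Hence $\chi_{\cP}(G;1)$ equals $1$ when $G\in\cP$ and $0$ otherwise.

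To double-check against the polynomial identity (\ref{eq:harary}), I would substitute $x=1$ into
\begin{gather*}
\chi_{\cP}(G;x)=\sum_{i\geq 1} b_i^{\cP}(G)\, x_{(i)}.
\end{gather*}
Every falling factorial $x_{(i)}$ with $i\geq 2$ contains the factor $(x-1)$ and therefore vanishes at $x=1$, while $1_{(1)}=1$. Thus $\chi_{\cP}(G;1)=b_1^{\cP}(G)$, and Facts~\ref{facts}(ii) identifies this with the indicator of $G\in\cP$.

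There is no real obstacle here; the only point worth being careful about is the edge case $i=1$ in the sum (making sure the convention in (\ref{eq:harary}) starts at $i=1$, not $i=0$, so that the nullgraph issue from Facts~\ref{facts}(i) does not intrude), and the observation that for $k=1$ the ``at most $k$ colors'' formulation collapses to the single constant map rather than to an empty sum.
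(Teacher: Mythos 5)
Your argument is correct; the paper states this lemma without proof, and your direct counting argument (the unique map $c:V(G)\to[1]$ is a $\cP$-coloring iff $G\in\cP$), cross-checked via the falling-factorial expansion and Facts~\ref{facts}(ii), is exactly the intended justification. Nothing to add.
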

Using Lemma \ref{le:1} we get
\begin{proposition}
\label{pr:notharary}
Let $F(G;x)$ be a graph polynomial and $G$ be a graph such that $F(G;1) \neq 0$ and $F(G;1) \neq 1$.
Then there is no graph property $\cP$ such that $\chi_{\cP}(G;x) = F(G;x)$.
\end{proposition}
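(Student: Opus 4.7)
The plan is to prove the contrapositive directly from Lemma \ref{le:1}. Suppose, toward a contradiction, that there does exist a graph property $\cP$ with $\chi_{\cP}(G;x) = F(G;x)$ as polynomials in $x$. Then the two polynomials must agree at every value of $x$, in particular at $x=1$, so $F(G;1) = \chi_{\cP}(G;1)$.

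By Lemma \ref{le:1}, the value $\chi_{\cP}(G;1)$ is either $0$ or $1$, depending only on whether $G \in \cP$ or $G \notin \cP$. Hence $F(G;1) \in \{0,1\}$, which contradicts the hypothesis that $F(G;1) \neq 0$ and $F(G;1) \neq 1$.

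There is no real obstacle here; the statement is essentially a repackaging of Lemma \ref{le:1} as an obstruction criterion, intended for applications to concrete graph polynomials such as the characteristic, Laplacian, matching, independence, and domination polynomials mentioned in the abstract. The only thing worth checking is that the hypothesis is stated on a single witness graph $G$ rather than universally, which is exactly what the contrapositive argument needs: one bad value of $F(G;1)$ at any single graph $G$ rules out every possible $\cP$ simultaneously.
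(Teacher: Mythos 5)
Your proof is correct and is exactly the argument the paper intends: the proposition is stated as an immediate consequence of Lemma \ref{le:1} (evaluate at $x=1$ and observe $\chi_{\cP}(G;1)\in\{0,1\}$), which is precisely what you do. Nothing is missing.
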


The characteristic polynomial $char(G;x)$ of a graph is the characteristic polynomial of its
adjacency matrix, and the Laplacian polynomial $Lap(G;x)$ is the characteristic polynomial of its
Laplace matrix, see \cite{bk:BrouwerHaemers2012}.

The matching polynomials are defined using $m_i(G)$, the number of matchings of $G$ of size $i$.
$$
M(G;x) = \sum_i m_i(G) x^i \mbox{ and } \mu(G;x) = \sum_i (-1)^i m_i(G) x^{n-2i}.
$$
$M(G;x)$ is the {\em generating matching polynomial} and $\mu(G;x)$ is the {\em matching defect polynomial},
see \cite{bk:LovaszPlummer86}.

Let $in_i(G)$ be the number of of independent sets of $G$ of size $i$, and
$d_i(G)$ the number of dominating sets of $G$ of size $i$.
We define the {\em independence polynomial} $IND(G;x)$, \cite{pr:LevitMandrescu05},
and the {\em domination polynomial} $DOM(G;x)$,
\cite{arocha2000mean,phd:Alikhani,ar:KotekPreenSimonTittmanTrinks2012} as
$$
IND(G;x) = \sum_i in_i(G) x^i  \mbox{ and }  DOM(G;x) = \sum_i d_i(G) x^i.
$$ 
\begin{theorem}
The following are not Harary polynomials of the form $\chi_{\cP}(G;x)$:
\begin{enumerate}[(i)]
\item
The characteristic polynomial $char(G;x)$ and the Laplacian polynomial $Lap(G;x)$.
\item
The generating matching polynomial $M(G;x)$  and the defect matching polynomial $\mu(G;x)$.
\item
The independence polynomial $IND(G;x)$.
\item
The domination polynomial $DOM(G;x)$.
\end{enumerate}
\end{theorem}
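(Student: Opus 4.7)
The plan is a direct application of Proposition~\ref{pr:notharary}: for each of the four polynomials I would exhibit a small graph $G$ at which the evaluation at $x=1$ is neither $0$ nor $1$, and then conclude that there is no property $\cP$ with $\chi_{\cP}(G;x) = F(G;x)$.

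For the characteristic polynomial I would not use $K_2$, since its adjacency matrix has $\pm 1$ as eigenvalues and so $char(K_2;x) = x^2 - 1$ vanishes at $1$. Instead I would take $K_3$: its adjacency matrix has eigenvalues $2,-1,-1$, giving $char(K_3;x) = (x-2)(x+1)^2$ and $char(K_3;1) = -4$. For the Laplacian $K_2$ already works, since its Laplace matrix has spectrum $\{0,2\}$, so $Lap(K_2;x) = x(x-2)$ and $Lap(K_2;1) = -1$.

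For the generating matching polynomial, $M(K_2;x) = 1+x$, so $M(K_2;1) = 2$. For the defect matching polynomial, $K_2$ again gives $\mu(K_2;x) = x^2 - 1$, an accidental zero, so I would switch to the path $P_3$: with $m_0 = 1$, $m_1 = 2$, $n = 3$ one obtains $\mu(P_3;x) = x^3 - 2x$ and $\mu(P_3;1) = -1$. For the independence polynomial, $IND(K_2;x) = 1 + 2x$ has value $3$ at $x=1$. For the domination polynomial on $K_2$ the dominating sets are the two singletons and the full vertex set, so $DOM(K_2;1) = 3$.

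There is essentially no obstacle here once Proposition~\ref{pr:notharary} is in hand; the only mild care required is noticing the coincidental vanishing of $char$ and $\mu$ at $x=1$ on $K_2$ (because $\pm 1$ is an eigenvalue, respectively because $\mu$ of an edge is $x^2-1$), so that a slightly larger witness such as $K_3$ or $P_3$ must be used in those two cases. In every other case the smallest nontrivial graph already suffices, and the theorem follows item by item from Proposition~\ref{pr:notharary}.
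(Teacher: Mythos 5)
Your proposal is correct and follows exactly the paper's approach: apply Proposition~\ref{pr:notharary} after evaluating each polynomial at $x=1$ on a small witness graph (the paper uses $C_4$ for (i)--(iii) and $K_2$ for (iv), while you substitute $K_3$, $K_2$ and $P_3$). The only point needing care is that your witness $M(K_2;x)=1+x$ relies on counting the empty matching ($m_0=1$), which agrees with the definition as literally stated but not with the paper's own worked example $M(C_4;x)=4x+2x^2$, under whose convention $M(K_2;1)=1$ and the witness would fail; a graph with at least two matchings of positive size, such as $C_4$ or $P_3$, is the safer choice there.
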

\begin{proof}
We use Proposition \ref{pr:notharary}.
(i):
$char(C_4;x) = (x-2)x^2(x+2)$ and  $Lap(C_4;x) =  x(x-4)(x-2)^2$,
\\
hence $char(C_4;1) = Lap(C_4;1) =   -3$.

(ii):
$ M(C_4;x) =4x +2x^2 \mbox{  and  }  \mu(C_4;x) = 1 +4x + 2x^2$,  
\\
hence $ M(C_4;1) =6 \mbox{  and  }  \mu(C_4;1) = 7$, 

(iii):
$ IND(C_4;x) = 1+ 4x +2x^2$, hence $IND(C_4;1) = 7$.

(iv):
$ DOM(K_2;x) =2x+x^2$, hence  $DOM(K_2;1)= 3$.
\hfill $\Box$
\end{proof}

$DOM$ and $IND$ are special cases graph polynomials of the form
$$
\cP_{\Phi}(G;x) = \sum_{A \subseteq V(G): \Phi(A)} x^{|A|}.
$$
Graph polynomials of this form are {\em generating functions} 
counting subsets $A \subseteq V(G)$ satisfying a property $\Phi(A)$,
in the cases above,
that $A$ is an independent, respectively a dominating set, see also \cite{makowsky2019logician}.
We say that {\em $\Phi$ determines $A$}, 
if for every graph $G$ there is a unique $A \subseteq V(G)$ which satisfies $\Phi(A)$.

\begin{theorem}
Assume that $\Phi$ does not determine $A$, then there is no graph property $\cP$ such that
for all graphs $G$
$\chi_{\cP}(G;x) = \cP_{\Phi}(G;x)$.
Hence $\cP_{\Phi}(G;x)$ cannot be a Harary polynomial.
\end{theorem}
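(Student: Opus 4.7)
The strategy is to reduce to Proposition~\ref{pr:notharary} whenever possible, using Lemma~\ref{le:1} and Facts~\ref{facts} to handle the residual case. The central observation is simply
\[
\cP_\Phi(G;1) \;=\; |\{A \subseteq V(G) : \Phi(A)\}| \;=:\; N(G),
\]
so understanding $\cP_\Phi$ at $x=1$ reduces to counting the solutions of $\Phi$ on $G$.

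Since $\Phi$ does not determine $A$, there exists some graph $G_0$ with $N(G_0) \neq 1$, so either $N(G_0) \geq 2$ or $N(G_0) = 0$. In the first case I would invoke Proposition~\ref{pr:notharary} directly: $\cP_\Phi(G_0;1) \geq 2$ is neither $0$ nor $1$, so no graph property $\cP$ can satisfy $\chi_\cP(G;x) = \cP_\Phi(G;x)$. This handles the main case in a single line and should be the clean takeaway the authors intend.

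The residual case, in which $N(G) \leq 1$ for every $G$ but $N(G_0)=0$ for some $G_0$, is more delicate and seems to be the real obstacle. Here $\cP_\Phi$ takes the monomial form $\cP_\Phi(G;x) = x^{|A_G|}$ on every graph $G$ admitting a unique $A_G$, while $\cP_\Phi(G_0;x) \equiv 0$. Assume toward a contradiction that $\chi_\cP = \cP_\Phi$ for some $\cP$. Lemma~\ref{le:1} forces $G \in \cP$ precisely when $N(G)=1$. Facts~\ref{facts}(iii)--(iv) then pin down the degree of $\chi_\cP(G;x)$, and this clashes with the monomial shape $x^{|A_G|}$: either $K_1 \in \cP$, in which case $\chi_\cP(G;x)$ is monic of degree $|V(G)|$, forcing $|A_G|=|V(G)|$ and, via the falling-factorial expansion of $x^{|V(G)|}$ and comparison with $\sum_i b_i^\cP(G) x_{(i)}$, forcing every block of every partition of $V(G)$ to induce a member of $\cP$ (so $\cP$ collapses to the class of all graphs, contradicting $\chi_\cP(G_0;x)=0$); or $K_1 \notin \cP$, in which case the degree of $\chi_\cP(G;x)$ is strictly less than $|V(G)|$ and a similar Stirling-coefficient comparison fails on at least one coefficient of $x_{(i)}$.

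The main obstacle is this residual case, because the equality $\cP_\Phi(G_0;1)=0$ is on its own compatible with $\chi_\cP(G_0;1)=0$, so Proposition~\ref{pr:notharary} alone does not finish the argument; one must propagate the constraint from $G_0$ to the full polynomial on graphs where $\Phi$ does have a (unique) solution, and that coupling is what forces a contradiction with the structure of Harary polynomials given in Facts~\ref{facts}.
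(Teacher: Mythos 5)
Your first case is exactly the paper's entire proof: since $\cP_\Phi(G;1)=|\{A\subseteq V(G):\Phi(A)\}|$, a graph with at least two satisfying subsets contradicts Lemma~\ref{le:1} via Proposition~\ref{pr:notharary}. The paper stops there: it reads ``$\Phi$ does not determine $A$'' as directly supplying a graph $H$ with $\cP_\Phi(H;1)\geq 2$, and never considers your residual case.

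That residual case, however, is where your argument has a genuine gap, and in fact it cannot be repaired as stated. The decisive step for $K_1\notin\cP$ --- ``a similar Stirling-coefficient comparison fails on at least one coefficient of $x_{(i)}$'' --- is asserted rather than proved, and it is false in general: take $\Phi(A)$ unsatisfiable on every graph, so $N(G)=0$ everywhere and $\cP_\Phi(G;x)\equiv 0$. Then $\cP=\emptyset$ (a trivial but legitimate graph property in this paper's terminology) gives $\chi_\cP(G;x)\equiv 0$ as well, since no colour class can induce a graph in $\emptyset$ and $b_0^{\cP}(G)=0$ by Facts~\ref{facts}(i); so $\cP_\Phi$ \emph{is} a Harary polynomial even though $\Phi$ does not determine $A$ in the literal sense. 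Thus the theorem, read literally, is false in your residual case, and no coefficient comparison can close it. (Your $K_1\in\cP$ sub-case also has a local slip: it forces every induced subgraph of each $G$ with $N(G)=1$ into $\cP$, not ``$\cP$ is all graphs''; the quicker observation is that $K_1\in\cP$ makes $\chi_\cP$ monic of positive degree, hence nonzero, contradicting $\chi_\cP(G_0;x)=0$.) The correct resolution --- and the reading the paper silently adopts, which is satisfied by $IND$ and $DOM$ --- is that ``$\Phi$ does not determine $A$'' should be taken to mean that some graph carries at least two satisfying subsets, in which case your first paragraph already finishes the proof.
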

\begin{proof}
By Lemma \ref{le:1} $\chi_{\cP}(G;1) \in \{0,1\}$ for all graphs $G$.
However, since $\Phi$ does not determine $A$,  there is a graph $H$ with $\cP_{\Phi}(H;1) \geq 2$.
\hfill $\Box$
\end{proof}

\section{Are Harary polynomials edge elimination invariants?}
\label{se:EEI}
\subsection{Chromatic invariants}
Following \cite[Chapter 9.1]{bk:Aigner2007},
a function $f$ which maps graphs into a polynomial ring $\mathcal{R}=\mathcal{K}[\bar{X}]$ with coefficents
in a field  $\mathcal{K}$ of characteristic $0$ is called a {\em chromatic invariant} (aka {\em Tutte-Grothendieck invariant})
if the following hold.
\begin{enumerate}[(i)]
\item
If $G$ has no edges, $f(G)=1$.
\ifskip\else
\item
Let $G$ consist of a single edge $e=(v_1,v_2)$. 
\\
If $v_1 \neq v_2$ ($e$ is a bridge), $f(G) =A \in \mathcal{R}$.
\\
If $v_1 = v_2$ ($e$ is a loop), $f(G) =B \in \mathcal{R}$.
\fi 
\item
If  $e \in E(G)$ is a bridge, then $f(G)= A \cdot f(G_{-e})$. 
\item
If  $e \in E(G)$ is a loop, then $f(G)= B \cdot f(G_{-e})$. 
\item
There exist $\alpha, \beta \in \mathcal{R}$ such that
for every $e \in E(G)$ which is neither a loop nor a bridge we have
$
f(G) = \alpha \cdot f(G_{-e}) + \beta \cdot f(G_{/e})
$.
\item
Multiplicativity: 
If $G = G_1 \sqcup G_2$ is the disjoint union of two graphs $G_1, G_2$ then
$
f(G) = f(G_1) \cdot f(G_2)
$.
\end{enumerate}

Chromatic invariants have a characterization via the Tutte polynomial $T(G;x,y)$, see
\cite[Chapter 9.1, Theorem 9.5]{bk:Aigner2007}.

\begin{theorem}
Let $f$ be a chromatic invariant with $A, B, \alpha, \beta$ indeterminates as above. Then for all graphs
$G$ 
$$
f(G) =  
\alpha^{|E| - |V| + k(G)} \cdot \beta^{|V| -k(G)} \cdot T(G; \frac{A}{\beta}, \frac{B}{\alpha}).
$$
\end{theorem}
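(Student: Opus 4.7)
The plan is to prove the identity by induction on $|E(G)|$, matching the deletion--contraction axioms (i)--(v) of a chromatic invariant against the well-known recurrences of the Tutte polynomial: $T(G;x,y) = T(G_{-e};x,y) + T(G_{/e};x,y)$ when $e$ is neither a loop nor a bridge, $T(G;x,y) = x \cdot T(G_{/e};x,y)$ when $e$ is a bridge, and $T(G;x,y) = y \cdot T(G_{-e};x,y)$ when $e$ is a loop.

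For the base case $|E(G)| = 0$, axiom (i) gives $f(G) = 1$. On the right-hand side, the exponents $|E| - |V| + k(G)$ and $|V| - k(G)$ both vanish (an edgeless graph on $n$ vertices has $n$ components), and $T(G; x, y) = 1$ for an edgeless graph, so equality holds.

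For the inductive step, I would fix $e \in E(G)$ and split into three cases according to whether $e$ is a loop, a bridge, or an ordinary edge. In each case the relevant axiom for $f$ together with the induction hypothesis reduces the claim to an algebraic identity comparing the prefactor $\alpha^{|E| - |V| + k(G)}\beta^{|V| - k(G)}$ attached to $G$ with the analogous prefactors for $G_{-e}$ and $G_{/e}$, scaled by the substitutions $x = A/\beta$ and $y = B/\alpha$ applied to the Tutte recurrence. For example, in the loop case, deleting $e$ leaves $|V|$ and $k(G)$ unchanged while decreasing $|E|$ by one, so the new prefactor loses exactly one factor of $\alpha$; combined with the $(B/\alpha)$ coming from the Tutte recurrence, this produces a clean factor of $B$, matching axiom (iii).

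The main obstacle, which is really just careful bookkeeping rather than a conceptual hurdle, is tracking how $|E|$, $|V|$ and $k(G)$ transform under each of the three operations and verifying that the $\alpha$- and $\beta$-factors so produced combine with the Tutte recurrence to reproduce exactly the constants $A$, $B$, $\alpha$ and $\beta$ dictated by (ii)--(iv). Multiplicativity (v) is not invoked in the induction itself, but it is consistent with the identity since both sides factor over disjoint unions, which provides a useful sanity check and handles the split that occurs when a bridge is deleted.
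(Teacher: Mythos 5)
The paper does not actually prove this statement; it is quoted from Aigner (Chapter 9.1, Theorem 9.5) as a known characterization of Tutte--Grothendieck invariants, so there is no in-paper argument to compare against. Your induction on $|E(G)|$ is the standard proof of this ``recipe theorem'' and the outline is correct: the base case checks out because both exponents $|E|-|V|+k(G)$ and $|V|-k(G)$ vanish on edgeless graphs, and in the loop and ordinary-edge cases the changes in $(|E|,|V|,k)$ under deletion and contraction make the prefactors absorb the substitutions $x=A/\beta$, $y=B/\alpha$ exactly as you describe. The one place where your sketch is too casual is the bridge case: the chromatic-invariant axiom recurses via $f(G)=A\cdot f(G_{-e})$, whereas the Tutte recurrence you quote is $T(G;x,y)=x\cdot T(G_{/e};x,y)$, and deletion versus contraction of a bridge changes $(|V|,k)$ differently (deletion keeps $|V|$ and increments $k$; contraction drops $|V|$ and keeps $k$ --- conveniently both leave $|E|-|V|+k$ fixed and decrement $|V|-k$). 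To close the loop you must invoke the identity $T(G_{-e};x,y)=T(G_{/e};x,y)$ for a bridge $e$ (both equal the product of the Tutte polynomials of the two pieces, by multiplicativity of $T$ over disjoint unions and one-point joins), or equivalently use the form $T(G;x,y)=x\cdot T(G_{-e};x,y)$ of the bridge rule. Your remark that multiplicativity ``handles the split'' gestures at this but should be made explicit, since it is the only step that is not pure bookkeeping. With that point spelled out, the proof is complete.
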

It follows by a counting argument that not all Harary polynomials are chromatic invariants.
We characterize the
Harary polynomials which are chromatic invariants
in Theorem \ref{th:noteei} below.

\subsection{Edge elimination invariants}
\label{sec:Xi}

The Tutte polynomial generalizes the chromatic, flow and other graph polynomials. 
It is natural to search for polynomials that generalize it, in turn. 
The {\em Most General Edge Elimination Invariant}, introduced in \cite{averbouch2010extension},\cite{averbouch2008most} 
and also known as the {\em $\xi$ polynomial}, generalizes the Tutte and the matching polynomials.
\begin{definition}[Edge Elimination Invariant]
Let $F$ be a graph parameter with values in a ring $\cR$.
$F$ is an {\em EE-invariant} if there exist $\alpha, \beta, \gamma \in \cR$
such that
\begin{align}
F(G)=F(G_{-e})+\alpha F(G_{/e})+ \beta F(G_{\dagger e})
\end{align}
where $e\in E(G)$, with the additional conditions
\begin{align}
F(\emptyset)=1, \hspace{0.3cm}
F(K_1)=\gamma, \ \ \  \text{and}  \ \ 
F(G \sqcup H)=F(G) \cdot F(H).
\end{align}
\end{definition}

Let $\xi(G;x,y,z)$ be the graph polynomial 
$$
\xi(G;x,y,z)=\sum_{A,B\subseteq E(G)} x^{c(A\cup B)-cov(B)}y^{|A|+|B|-cov(B)}z^{cov(B)},
$$
where the summation is over $A,B\subseteq E(G)$ such that the vertex subsets $V(A), 
V(B)$ covered by $A$ and $B$, respectively, are disjoint, 
$c(A)$ is the number of connected components in $(V(G),A)$, 
and $cov(B)$ is the number of covered connected component of $B$, 
i.e. the number of connected components of $(V(B),B)$.

\begin{theorem}[\cite{averbouch2008most}]
\begin{enumerate}[(i)]
\item
$\xi(G;x,y,z)$ 
is an EE-invariant.
\item
Every EE-invariant is a substitution instance of $\xi(G;x,y,z)$
multiplied by some factor $s(G)$ which only depends on the number of vertices, edges and connected components of $G$.
\item
Both the matching polynomial and the Tutte polynomial are EE-invariants given by
$$
T(G;x,y)=(x-1)^{-c(E(G))}(y-1)^{-|V(G)|}\xi(G;(x-1)(y-1),y-1,0),
$$
and
$$
M(G;w_1,w_2)=\xi(G;w_1,0,w_2).
$$
\end{enumerate}
\end{theorem}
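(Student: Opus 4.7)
The plan is to handle the three parts in order, using a case analysis driven by how a chosen edge $e = (u,v)$ interacts with the two summation variables $A, B \subseteq E(G)$.

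For part (i), I would fix $e \in E(G)$ and partition the admissible pairs $(A,B)$ (those with $V(A) \cap V(B) = \emptyset$) into three groups.  First, pairs with $e \notin A \cup B$ and neither $u$ nor $v$ covered by $B$: these correspond bijectively to admissible pairs in $G_{-e}$ with the same statistics, producing a contribution equal to $\xi(G_{-e};x,y,z)$.  Second, pairs with $e \in A$: contracting $e$ yields a pair $(A \setminus \{e\}, B)$ admissible in $G_{/e}$; here $|A|$ drops by one, $c(A \cup B)$ is unchanged, and $cov(B)$ is unchanged, which gives a factor of $y$, matching a coefficient of the form $\alpha = y$ in the recurrence.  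Third, pairs with $e \in B$ or with both endpoints of $e$ already inside $V(B)$: these correspond to the extraction $G_{\dagger e}$ after removing $u, v$ and all incident edges, with the $cov(B)$ count shifted so as to produce a weight of $z$ (or $yz$, depending on the subcase); these yield the $\beta$-term. Summing the three contributions reconstructs $\xi(G;x,y,z)$.  Multiplicativity over disjoint unions is immediate because admissible pairs split independently across components and each of $c, |A|, |B|, cov(B)$ is additive; the evaluations $\xi(\emptyset) = 1$ and $\xi(K_1) = x$ are direct from the definition.

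For part (ii), universality, I would argue by induction on $|V(G)| + |E(G)|$.  Given any EE-invariant $F$ with constants $\alpha', \beta', \gamma' \in \cR$, the recursion together with the base cases $F(\emptyset) = 1$, $F(K_1) = \gamma'$ determines $F$ uniquely on every graph, since one can always pick an edge (or otherwise use multiplicativity and the $K_1$ value).  It then suffices to exhibit a substitution $x \mapsto \phi_1(\alpha',\beta',\gamma')$, $y \mapsto \phi_2(\alpha',\beta',\gamma')$, $z \mapsto \phi_3(\alpha',\beta',\gamma')$ together with a normalization prefactor $s(G)$ depending only on $|V(G)|$, $|E(G)|$, $c(G)$, so that the recursion and base values of $s(G) \cdot \xi(G;\phi_1, \phi_2, \phi_3)$ match those of $F$; by uniqueness, they agree as graph invariants.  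The prefactor $s(G)$ is needed because $\xi$ as stated has $\xi(K_1) = x$ rather than an arbitrary $\gamma$, and the extra degrees of freedom coming from vertex and edge counts can be absorbed into monomial corrections.

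For part (iii), I would identify each specialization by verifying that the right-hand side is an EE-invariant with the correct recursion constants and base values, then invoke (ii).  For the matching polynomial, setting $y = 0$ in the defining sum forces $A = \emptyset$ and eliminates covered components of $B$ that contain non-edge components, so that the surviving terms enumerate matchings of $G$ weighted by $w_1^{c(B) - cov(B)} w_2^{cov(B)}$, which is the two-variable matching polynomial.  For the Tutte polynomial, setting $z = 0$ forces $B = \emptyset$, reducing $\xi$ to the standard subset-of-edges generating function $\sum_{A} x^{c(A)} y^{|A|}$; the prefactor $(x-1)^{-c(E(G))}(y-1)^{-|V(G)|}$ together with the substitutions $(x,y) \mapsto ((x-1)(y-1), y-1)$ converts this into the rank-nullity form of $T(G;x,y)$.

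The main obstacle is part (i): the bookkeeping for the deletion-contraction-extraction recurrence is intricate because $\xi$ is defined by a double sum with a vertex-disjointness constraint, so one must carefully catalogue subcases (e.g.\ edges of $A$ that become loops after contraction, and edges forbidden from $A$ because their endpoint lies in $V(B)$) and verify that the statistics $c(A \cup B)$ and $cov(B)$ transform exactly as the weights of the recurrence predict.  Once (i) is settled, the universality in (ii) and the explicit substitutions in (iii) are comparatively routine.
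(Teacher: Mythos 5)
First, a point of comparison: the paper does not prove this theorem at all --- it is imported verbatim from \cite{averbouch2008most} as a citation, so there is no in-paper argument to measure your proposal against. I can only assess your sketch on its own merits.

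There is a genuine gap in part (i), precisely at the spot you flagged as ``intricate'' and then waved away. Your three groups are not a partition of the admissible pairs (a pair with $e\notin A\cup B$ and exactly one endpoint of $e$ in $V(B)$ falls into none of them), and, more seriously, the groups do not individually produce the three terms of the recurrence. Take $G=P_3$ with edges $e=(u,v)$ and $f=(v,w)$. The admissible pairs with $e\in A$ are $(\{e\},\emptyset)$ and $(\{e,f\},\emptyset)$, contributing $x^2y+xy^2$, whereas $y\,\xi(G_{/e};x,y,z)=y(x^2+xy+z)=x^2y+xy^2+yz$: the map $(A,B)\mapsto(A\setminus\{e\},B)$ into $G_{/e}$ is injective but \emph{not} surjective, because after contraction the merged vertex may be covered by $B$, and such pairs of $G_{/e}$ have no admissible preimage with $e\in A$ (in $G$ they would violate $V(A)\cap V(B)=\emptyset$). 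Dually, the pairs with $e\in B$ contribute $xz+yz$, not $z\,\xi(G_{\dagger e};x,y,z)=xz$: when the $B$-component containing $e$ has more than the two vertices $u,v$, extraction destroys that component rather than preserving it. The two discrepancies cancel (the $yz$ term migrates from your extraction group to the contraction group), so the recurrence is true, but a correct proof must set up this transfer explicitly --- e.g.\ by matching the pairs of $G$ whose $B$-component through $e$ is larger than $\{u,v\}$ with the pairs of $G_{/e}$ in which the merged vertex lies in $V(B)$. As written, the claimed bijections are false and the argument does not close. Parts (ii) and (iii) are essentially sound (the $y=0$ specialization does force $A=\emptyset$ and $B$ a matching; $z=0$ forces $B=\emptyset$ and the Whitney rank expansion yields the Tutte identity), but (ii) silently relies on knowing that the recursion constants of $\xi$ are $(1,y,z)$ with $\xi(K_1)=x$, which is exactly what the incomplete part (i) has to deliver.
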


\subsection{Are Harary polynomials EE-invariants?}

\ifskip
\else
\begin{lemma}
\label{le:ee}
Let $\cP$ be monotone (hereditary,  minor closed) and $H$ be a graph of smallest order, 
and among those of smallest and size, such that
$H$ is a forbidden subgraph (induced subgraph, minor) of $\cP$.
Assume further that $H$
has at least four vertices and one edge $e \in E(H)$.
Then $\chi_{\cP}(G;x)$ 
is not an EE-invariant.
\end{lemma}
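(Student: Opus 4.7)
The plan is to derive a contradiction from the assumption that $\chi_{\cP}(G;x)$ is an EE-invariant, by evaluating the hypothesis on the two-vertex graph $K_2$. Once the parameters of the recurrence are pinned down there, they force $\chi_{\cP}$ to be the trivial polynomial $x^{|V(G)|}$, which is incompatible with $H\notin\cP$.

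First I would exploit the smallest-order choice of $H$, together with $|V(H)|\geq 4$: in each of the three closure regimes (hereditary, monotone, minor-closed) every graph on fewer than $|V(H)|$ vertices automatically lies in $\cP$, since it cannot contain $H$ as an induced subgraph, a subgraph, or a minor, and by assumption no forbidden configuration has strictly fewer vertices than $H$. Hence $K_1,\overline{K_2},K_2\in\cP$. Using \eqref{eq:harary} and the multiplicativity axiom, this yields $\gamma=\chi_{\cP}(K_1;x)=x$, $\chi_{\cP}(\overline{K_2};x)=x^2$, and, since both the one-block and the two-block partitions of $V(K_2)$ have all parts in $\cP$, $\chi_{\cP}(K_2;x)=x+x(x-1)=x^2$.

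Second, I would apply the EE-recursion to the unique edge $e_0$ of $K_2$. Noting that $K_2-e_0=\overline{K_2}$, $K_2/e_0=K_1$ and $K_2\dagger e_0=\emptyset$, the recurrence reads
\[
x^2 \;=\; x^2 \,+\, \alpha\, x \,+\, \beta,
\]
so comparing coefficients in $\ZZ[x]$ forces $\alpha=\beta=0$. The EE-recurrence then degenerates to $\chi_{\cP}(G;x)=\chi_{\cP}(G_{-e};x)$ for every $G$ and every $e\in E(G)$; peeling off edges one at a time and invoking multiplicativity yields $\chi_{\cP}(G;x)=x^{|V(G)|}$ for every graph $G$.

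Finally, evaluating at $x=1$ would give $\chi_{\cP}(H;1)=1$, whereas Lemma~\ref{le:1} combined with $H\notin\cP$ forces $\chi_{\cP}(H;1)=0$, a contradiction. Note that the argument does not actually use the designated edge $e\in E(H)$ or the minimum-size choice of $H$ among forbidden graphs of minimum order; the only delicate point---the candidate ``main obstacle''---is verifying $K_2\in\cP$ uniformly in the three closure hypotheses, and this is handled immediately by $|V(H)|\geq 4$ together with the smallest-order choice of $H$.
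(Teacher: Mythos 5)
There is a genuine gap at the pivot of your argument. In the paper's definition of an EE-invariant, the coefficients $\alpha,\beta,\gamma$ are elements of the ring $\cR$ in which the invariant takes its values; for $\chi_{\cP}(G;x)\in\ZZ[x]$ this means $\alpha,\beta\in\ZZ[x]$ may themselves be polynomials in $x$ (indeed $\gamma=\chi_{\cP}(K_1;x)=x$ already is, and the generating matching polynomial, which the paper counts among the EE-invariants, satisfies $M(G;x)=M(G_{-e};x)+x\,M(G_{\dagger e};x)$, i.e.\ $\beta=x$). Hence your identity $x^2=x^2+\alpha x+\beta$ obtained from $K_2$ only yields $\beta(x)=-\alpha(x)\,x$, not $\alpha=\beta=0$; the solution $\alpha=1,\ \beta=-x$ survives. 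Consequently the recurrence does not degenerate to $\chi_{\cP}(G;x)=\chi_{\cP}(G_{-e};x)$, the conclusion $\chi_{\cP}(G;x)=x^{|V(G)|}$ does not follow, and the final contradiction evaporates. Your own closing remark that the proof ``does not actually use the designated edge $e\in E(H)$'' or the bound $|V(H)|\geq 4$ should have been a warning: those hypotheses are exactly what the intended argument needs.

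The repair is to evaluate the recurrence at $H$ itself, which is what the paper does. By minimality, $\chi_{\cP}(H;x)=x^{|V(H)|}-x$ (only the one-block partition fails), while $H_{-e}$, $H_{/e}$, $H_{\dagger e}$ and all their induced subgraphs lie in $\cP$, so $\chi_{\cP}(H_{-e};x)=x^{|V(H)|}$, $\chi_{\cP}(H_{/e};x)=x^{|V(H)|-1}$, $\chi_{\cP}(H_{\dagger e};x)=x^{|V(H)|-2}$. The EE-relation would force
\begin{equation*}
x^{|V(H)|}-x \;=\; x^{|V(H)|}+\alpha(x)\,x^{|V(H)|-1}+\beta(x)\,x^{|V(H)|-2},
\end{equation*}
and since $|V(H)|\geq 4$ the right-hand side has coefficient $0$ at $x^{1}$ for \emph{any} $\alpha(x),\beta(x)\in\ZZ[x]$, whereas the left-hand side has coefficient $-1$. (Note that your partial conclusion $\beta=-\alpha x$ from $K_2$ would also finish the job when substituted at $H$, since then the right-hand side collapses to $x^{|V(H)|}$; but as written your proof does not take that route.) Your preliminary computations of $\chi_{\cP}$ on $K_1$, $\overline{K_2}$ and $K_2$ are correct but do not suffice on their own.
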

\begin{proof}
As $\cP$
is monotone (hereditary, minor closed) 
we note that deleting or contracting or extracting $e$ from $H$, we obtain a graph in $\cP$.
Hence we can compute:
\begin{gather}
\chi_{\cP}(H,x) = x^{|V(H)|} -x \\
\chi_{\cP}(H_{-e},x) = x^{|V(H)|} \\
\chi_{\cP}(H_{/e},x) = x^{|V(H)|-1} \\
\chi_{\cP}(H_{\dagger e},x) = x^{|V(H)|-2} 
\end{gather}

Now assume that $\chi_{\cP}$ is a EE-invariant.
Then we have
\begin{gather}
\chi_{\cP}(H,x) = x^{|V(H)|} -x \tag{*} \\
=
\chi_{\cP}(H_{-e},x) +
\alpha(x) \cdot \chi_{\cP} (H_{/e},x) +
\beta(x) \cdot \chi_{\cP}(H_{\dagger e},x) \notag \\
=
x^{V(H)} +
\alpha(x) \cdot x^{V(H)-1} +
\beta(x) \cdot x^{V(H)-2}  \tag{**}
\end{gather}
for $\alpha(x), \beta(x) \in \ZZ[x]$ polynomials in $x$.

If $|V(H)| \geq 4$ 
the coefficient of $x$ in (*) is $-1$, and in (**) it is $0$ which is a contradiction.
\hfill $\Box$
\end{proof}

\begin{theorem}
\label{th:noteei}
Assume that $\cP$ is monotone (hereditary, minor closed) and ultimately clique-free, but contains no edgeless graph. Then 
$\chi_{\cP}(G;x)$ is not an EE-invariant.
\end{theorem}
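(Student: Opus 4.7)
My plan is to reduce the statement to Lemma \ref{le:ee} by exhibiting, inside any such $\cP$, a suitable minimal forbidden graph of order at least four. Ultimate clique-freeness supplies a smallest $t$ with $K_t \notin \cP$; the hypothesis on $\cP$ excludes $t=2$, since $t=2$ combined with the closure of $\cP$ under the appropriate substructure would force $\cP$ to equal the class of edgeless graphs (for which $\chi_{\cP}=\chi$ is an EE-invariant, contrary to the conclusion we seek). Thus $t\geq 3$.

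I then take $H=K_t$ as the candidate graph in Lemma \ref{le:ee}. Because $\cP$ is closed under subgraphs (resp.\ induced subgraphs, minors) and $K_{t-1}\in\cP$, every proper substructure of $K_t$ is a clique $K_s$ with $s<t$ and therefore lies in $\cP$; so $K_t$ is a minimal forbidden graph in the lemma's sense. For $t\geq 4$, the lemma's coefficient-of-$x$ trick settles matters: $\chi_{\cP}(K_t;x)=x^t-x$ carries a $-1$ at $x^1$, whereas any purported EE right-hand side is a polynomial combination of $x^t$, $x^{t-1}$, and $x^{t-2}$ and contributes $0$ to the $x^1$-coefficient, a contradiction.

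The delicate residual case is $t=3$, in which $\cP$ sits inside the triangle-free graphs and the minimal forbidden graph may have only three vertices. Here I would replace $K_t$ by a four-vertex surrogate $H'$ that is forbidden but whose three edge-modifications all lie in $\cP$. When $\cP$ forbids $K_3$, the diamond $D$ (obtained from $K_4$ by deleting one edge) works: $D\notin\cP$, and for the edge $e$ shared by the two triangles of $D$ we have $D_{-e}=C_4$, $D_{/e}=P_3$, $D_{\dagger e}=\bar{K}_2$, all triangle-free. A direct Stirling-number computation gives $\chi_{\cP}(D;x)=x^4-2x^2+x$, and the same coefficient-of-$x$ comparison (with $x^1$-coefficient $1$ on the left and $0$ on the right) again contradicts the EE identity. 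Matching-like $\cP$ (forbidding $P_3$) admits the analogous argument with $H'=P_4$.

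The main obstacle is precisely this $t=3$ analysis, since the correct surrogate depends on exactly which three-vertex graph is minimal forbidden and on whether further forbidden graphs move $C_4$, $P_3$, or $\bar{K}_2$ themselves outside $\cP$. A cleaner uniform alternative is to evaluate the EE identity at $x=1$ via Lemma \ref{le:1}: the identity then reduces to an integer relation among the indicators $[G\in\cP]$, and exhibiting two graph-edge pairs realizing incompatible in/out-of-$\cP$ patterns of $(G, G_{-e}, G_{/e}, G_{\dagger e})$ forces $\alpha(1)+\beta(1)$ to take two different values, a contradiction.
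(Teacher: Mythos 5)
Your overall strategy matches the paper's: dispatch the generic case through Lemma \ref{le:ee} and handle the triangle case by hand with an explicit four-vertex gadget (the paper uses the paw $K_1 \bowtie (K_2 \sqcup K_1)$ where you use the diamond, but the coefficient/degree comparison is the same trick). The genuine gap is in your reduction step. Lemma \ref{le:ee} does not apply to just any forbidden graph with at least four vertices: it applies to the forbidden subgraph (induced subgraph, minor) of \emph{globally} smallest order and, among those, smallest size, because its proof needs $\chi_{\cP}(H_{-e};x)=x^{|V(H)|}$, $\chi_{\cP}(H_{/e};x)=x^{|V(H)|-1}$ and $\chi_{\cP}(H_{\dagger e};x)=x^{|V(H)|-2}$, and these identities require \emph{every} graph of smaller order (and of equal order with fewer edges) to lie in $\cP$. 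Your $H=K_t$, the smallest forbidden clique, need not be that minimal forbidden graph. Take $\cP$ to be the hereditary, ultimately clique-free class of disjoint unions of cliques of order at most $4$, i.e.\ $Forb^{ind}(P_3)\cap Forb(K_5)$: here $t=5$, but $P_3$ is a forbidden induced subgraph of order $3$, $K_5-e$ contains an induced $P_3$ and hence is not in $\cP$, and $\chi_{\cP}(K_5-e;x)\neq x^5$; the right-hand side of the EE recurrence is then not $x^5+\alpha x^4+\beta x^3$ and your coefficient-of-$x$ comparison collapses. (Your justification that ``every proper substructure of $K_t$ is a clique'' is in any case only true for induced subgraphs; subgraphs and minors of $K_t$ comprise all graphs on at most $t$ vertices.) The correct organization, which is what the paper does, is to case on the genuinely minimal forbidden configuration $H$, reserving Lemma \ref{le:ee} for $|V(H)|\geq 4$ and treating the small $H$ (disconnected, $K_2$, $P_3$, $K_3$) separately.

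On the residual cases: your diamond computation $\chi_{\cP}(D;x)=x^4-2x^2+x$ is correct, but only under the unverified assumption that $K_1,K_2,E_2,P_3$ and $C_4$ all lie in $\cP$; you candidly flag this, but ``I would replace $K_t$ by a surrogate depending on which three-vertex graph is minimal forbidden'' is a plan rather than a proof, and it is exactly where the work remains. (To be fair, the paper's own handling of the $K_3$ case, via $\alpha=0$, $\beta=-1$ and the paw, rests on the same unstated membership assumptions, and both arguments tacitly read the hypothesis ``contains no edgeless graph'' loosely, since taken literally it would exclude $K_1$ from any hereditary $\cP$ and make the theorem vacuous.) Your closing suggestion of evaluating the EE identity and multiplicativity at $x=1$, where $\chi_{\cP}(G;1)$ is the indicator of $G\in\cP$, is a genuinely good instinct: it is in essence Lemma \ref{le:multiplicative} of the paper's final version of this theorem, where a forbidden graph that splits as a disjoint union of two graphs in $\cP$ already defeats multiplicativity, and developing that line would give a cleaner and more uniform proof than either your clique-based reduction or the gadget computations.
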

\begin{proof}
For every $G \in \cP$ the set of edges $E(G) \neq \emptyset$,  $K_3$ is a forbidden subgraph (induced subgraph, minor).
If $K_4$ is a forbidden subgraph (induced subgraph, minor) of $\cP$, we apply Lemma  \ref{le:ee}.
Otherwise we compute
$\chi_{\cP}(K_3,x) = x^3-x$.
Let $H= K_1 \bowtie (K_2 \sqcup K_1)$.
$\chi_{\cP}(H, x)$ has degree $4$, since $\chi_{\cP}(H, x)$ is monic and $H$ has order $4$ (Facts \ref{facts}).
\\
Assuming that $\chi_{\cP}(H, x)$ is an EE-invariant,
$\chi_{\cP}(K_3,x) =x^3-x$ gives $\alpha=0$ and $\beta=-1$.
Computing $\chi_{\cP}(H,x)$ we get 
$
\chi_{\cP}(H,x) = \chi_{\cP}(K_3) + \alpha \chi_{\cP}(K_3) + \beta \chi_{\cP}(K_2) 
$.
But this is 
a polynomial of degree $3$, which is a contradiction.
\hfill $\Box$
\end{proof}
\fi 
\begin{theorem}
\label{th:noteei}
Let $\cP$ be a non-trivial (minor closed/monotone/hereditary) graph property. 
Then $\chi_\cP$ is an EE-invariant if and only if $\chi_\cP$ is the chromatic polynomial.
\end{theorem}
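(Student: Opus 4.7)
The plan is to prove the two directions separately, with the reverse implication being the classical fact that the chromatic polynomial obeys the deletion--contraction identity $\chi(G;x)=\chi(G_{-e};x)-\chi(G_{/e};x)$ together with multiplicativity over disjoint unions, and is therefore an EE-invariant with $(\alpha,\beta,\gamma)=(-1,0,x)$.

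For the forward direction I would assume $\chi_\cP$ is an EE-invariant and pin down $\alpha,\beta,\gamma$ by evaluating on the smallest graphs. Hereditariness (which follows from monotonicity or minor-closedness) together with non-triviality of $\cP$ gives $K_1\in\cP$, forcing $\gamma=\chi_\cP(K_1;x)=x$, and multiplicativity then yields $\chi_\cP(\overline{K_2};x)=x^2$. A direct application of \eqref{eq:harary} gives $\chi_\cP(K_2;x)=x^2$ when $K_2\in\cP$ and $x^2-x$ otherwise. Plugging these into the EE-recurrence for $K_2$, whose three edge-eliminations are $\overline{K_2}$, $K_1$, and $\emptyset$, pins down $(\alpha,\beta)=(0,0)$ in the former case and $(\alpha,\beta)=(-1,0)$ in the latter.

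In the sub-case $K_2\notin\cP$, every graph in $\cP$ must be edgeless, since otherwise it would contain $K_2$ as an induced subgraph (respectively subgraph, minor). Non-triviality forces $\cP$ to be infinite and hence to contain edgeless graphs of arbitrarily large order; heredity then makes $\cP$ equal to the class of all edgeless graphs, whence $\chi_\cP=\chi$ as claimed.

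The substantive sub-case is $K_2\in\cP$. Here the EE-recurrence collapses to $\chi_\cP(G;x)=\chi_\cP(G_{-e};x)$ for every graph $G$ and every edge $e\in E(G)$. Iterating edge-deletion and then applying multiplicativity yields $\chi_\cP(G;x)=\chi_\cP(\overline{K_n};x)=x^n$ for every graph $G$ on $n$ vertices. Evaluating at $x=1$ gives $\chi_\cP(G;1)=1$ for every $G$, and by Lemma~\ref{le:1} this forces $G\in\cP$ for every graph $G$, contradicting non-triviality. The main obstacle, or rather the key observation, is this collapse of the recurrence: once $K_2\in\cP$, the EE-structure degenerates so drastically that no non-trivial hereditary restriction on $\cP$ can survive, which is why one can bypass the more delicate smallest-forbidden-graph analysis of the skipped lemma.
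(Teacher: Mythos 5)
Your proof takes a genuinely different route from the paper's. The paper argues by cases on a minimal forbidden minor/subgraph/induced subgraph $H$ of $\cP$ (disconnected $H$; $H=K_1,K_2,P_3,K_3$; and $|V(H)|\geq 4$ with an edge), deriving a coefficient contradiction separately in each non-chromatic case. You instead split only on whether $K_2\in\cP$, and in the case $K_2\in\cP$ propagate the recurrence to show $\chi_\cP(G;x)=x^{|V(G)|}$ for \emph{every} $G$, which via Lemma~\ref{le:1} forces $\cP$ to contain all graphs, contradicting non-triviality. This is shorter, avoids the minimal-forbidden-graph bookkeeping, and the uniform appeal to Lemma~\ref{le:1} at the end is clean. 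The sub-case $K_2\notin\cP$ is handled correctly: heredity plus non-triviality (infinitely many graphs, none containing an edge) does force $\cP$ to be exactly the class of edgeless graphs, so $\chi_\cP=\chi$ there.

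There is, however, one unjustified step. The coefficients $\alpha,\beta$ of an EE-invariant with values in $\ZZ[x]$ are themselves elements of $\ZZ[x]$ (the paper's own proof writes $\alpha(x),\beta(x)\in\ZZ[x]$ explicitly), so the single relation $\alpha x+\beta=0$ obtained from the $K_2$-recurrence does \emph{not} pin down $(\alpha,\beta)=(0,0)$; it only yields $\beta=-\alpha x$, and for instance $(\alpha,\beta)=(1,-x)$ remains admissible. Consequently the recurrence does not literally collapse to $\chi_\cP(G;x)=\chi_\cP(G_{-e};x)$, and ``iterating edge-deletion'' is not available as stated. The conclusion survives, but you must argue by induction on $|E(G)|$: assuming $\chi_\cP(H;x)=x^{|V(H)|}$ for all $H$ with fewer edges (base case: edgeless graphs, via multiplicativity and $\chi_\cP(K_1;x)=x$), the recurrence with $\beta=-\alpha x$ gives
$$
\chi_\cP(G;x)=x^{n}+\alpha\, x^{n-1}-\alpha x\cdot x^{n-2}=x^{n},
$$
since $G_{-e}$, $G_{/e}$, $G_{\dagger e}$ have $n$, $n-1$, $n-2$ vertices respectively and each has strictly fewer edges than $G$. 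The same caveat applies to your claim that $(\alpha,\beta)=(-1,0)$ in the sub-case $K_2\notin\cP$, but there you never use those values, so no harm is done. With this repair your proof is correct.
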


We need a lemma:
\begin{lemma}
\label{le:multiplicative}
Let $\cP$ be a non-trivial (minor closed/monotone/hereditary) graph property, and
$H$ a forbidden  minor, subgraph or induced subgraph of $\cP$.
\\
Assume $H = H_1 \sqcup H_2$ with both $H_1$ and $H_2$ in $\cP$. Then $\chi_\cP$  is not multiplicative.
\end{lemma}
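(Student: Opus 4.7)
The plan is to evaluate both sides of the purported multiplicativity identity at $x=1$ and derive a contradiction by invoking Lemma~\ref{le:1}.

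First I would argue that $H \notin \cP$. This is the common content of the three closure hypotheses: if $\cP$ is minor-closed and $H$ is a forbidden minor, then $H \notin \cP$ (as $H$ is trivially a minor of itself); similarly for a forbidden subgraph under monotonicity, and for a forbidden induced subgraph under heredity. By Lemma~\ref{le:1} this gives $\chi_{\cP}(H;1)=0$.

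Next I would apply Lemma~\ref{le:1} to the two factors. By assumption $H_1,H_2\in\cP$, so $\chi_{\cP}(H_1;1)=\chi_{\cP}(H_2;1)=1$. If $\chi_{\cP}$ were multiplicative with respect to disjoint union, then $H=H_1\sqcup H_2$ would satisfy
\[
\chi_{\cP}(H;1)\;=\;\chi_{\cP}(H_1;1)\cdot \chi_{\cP}(H_2;1)\;=\;1,
\]
contradicting $\chi_{\cP}(H;1)=0$. Hence $\chi_{\cP}$ cannot be multiplicative.

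I do not anticipate a serious obstacle here: the entire argument rests on the observation that being forbidden under the relevant closure operation forces $H\notin\cP$, whereas the factors $H_1,H_2$ are in $\cP$ by hypothesis; after that, Lemma~\ref{le:1} instantly separates the values $0$ and $1\cdot 1$. The only thing to be careful about is articulating uniformly that in each of the three cases (minor/subgraph/induced subgraph) $H$ lies in the corresponding forbidden family and therefore outside $\cP$.
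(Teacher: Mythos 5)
Your proposal is correct and follows essentially the same route as the paper: evaluate both sides of the multiplicativity identity at $x=1$, use Lemma~\ref{le:1} to get $\chi_{\cP}(H;1)=0$ since $H$ is forbidden while $\chi_{\cP}(H_1;1)\cdot\chi_{\cP}(H_2;1)=1$, and conclude by contradiction. Your explicit remark that ``forbidden'' forces $H\notin\cP$ in each of the three closure settings is a small clarification the paper leaves implicit, but the argument is the same.
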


\begin{corollary}
In particular,  if $H=E_n$ or $H=K_1 \cup K_2$, $\chi_\cP$ is not multiplicative. 
\end{corollary}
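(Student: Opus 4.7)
The corollary follows by verifying the hypothesis of Lemma~\ref{le:multiplicative} in each of the two listed cases, i.e., by producing a disjoint-union decomposition $H = H_1 \sqcup H_2$ with both $H_1, H_2 \in \cP$. Without loss of generality I would assume that the forbidden $H$ under consideration is \emph{minimal} with respect to the ordering corresponding to the closure type of $\cP$ (minor, subgraph, or induced subgraph): if $H$ were not minimal, some proper sub-structure of $H$ would already be forbidden, and the corollary could be applied with that smaller forbidden graph in place of $H$.

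For $H = E_n$ with $n \geq 2$, I would take $H_1 = K_1$ and $H_2 = E_{n-1}$; both are proper sub-structures of $E_n$ in each of the three closure senses, hence both lie in $\cP$ by the minimality of $H$. The edge case $n = 1$ cannot occur, since if $K_1$ itself were forbidden then every graph in $\cP$ would have no vertices, contradicting the non-triviality of $\cP$. For $H = K_1 \cup K_2$, the required decomposition is just $H = K_1 \sqcup K_2$ read off from the notation, and minimality of $H$ again places both $K_1$ and $K_2$ in $\cP$ (each being a proper sub-structure of $H$).

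With the decomposition in hand, Lemma~\ref{le:multiplicative} immediately yields that $\chi_\cP$ is not multiplicative, completing the corollary. The only mildly delicate step is the reduction to a minimal forbidden $H$; this is a standard observation for closed classes and requires no machinery from the paper, but it is worth recording explicitly so that the membership $H_1, H_2 \in \cP$ is visibly justified in all three closure regimes simultaneously.
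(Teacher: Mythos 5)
Your proof is correct and follows essentially the same route as the paper: exhibit the disjoint-union decomposition ($E_n = K_1 \sqcup E_{n-1}$, resp.\ $K_1 \sqcup K_2$) and invoke Lemma~\ref{le:multiplicative}, whose proof in the paper is the evaluation-at-$1$ argument via Lemma~\ref{le:1}. The only difference is that you make explicit the minimality of $H$ needed to place $H_1, H_2$ in $\cP$ (and rule out $H=K_1$), a point the paper leaves implicit from the surrounding context of Theorem~\ref{th:noteei}; this is a genuine clarification, since without minimality the case $H=K_1\cup K_2$ could degenerate to $\cP$ being the edgeless graphs, where $\chi_\cP$ \emph{is} multiplicative.
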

\begin{proof}
\black
If $H=H_1 \sqcup H_2$,  both $H_1, H_2$ are
minors, subgraphs, and induced subgraphs.
Hence we have 
$$
0=\chi_\cP(H,1)=\chi_\cP(H_1 \sqcup H_2,1) 
$$
as $H$ is forbidden, but
$$
\chi_\cP(H_1,1) \cdot\chi_\cP(H_2,1)=1.
$$
\end{proof}

\begin{proof}[of Theorem \ref{th:noteei}]
We analyze $H$,
a forbidden 
(minor/subgraph/induced subgraph) of $\cP$ with the smallest number of vertices and edges.
\\
The proof distinguishes between cases:
\begin{enumerate}[(i)]
\item
$H$ is not connected.
\item
$H=K_1$.
\item
$H = K_2$. 
\item
$H = P_3$
\item
$H = K_3$
\item
$H$ has order $\geq 4$ and size $\geq 1$.
\end{enumerate}
Case (i): $H$ is not connected.
\\
We use Lemma \ref{le:multiplicative}.

Case (ii): $H = K_1$  
\\
If $H =K_1$, $\cP$ is empty, hence trivial.

Case (iii): $H = K_2$  
\\
If $H = P_2$, then $\chi_\cP$  is the chromatic polynomial.

Case (iv) $H = P_3$
\\ 
We compute:
$ \chi_\cP(K_1,x)=x, \ \ \ \chi_\cP(K_2,x)=x^2$ and
\begin{gather}
\chi_\cP(P_3,x)=x^3-x, \ \ \ \chi_\cP(K_1\cup K_2,x)=x^3
\tag{*}
\end{gather}
Assuming that $\chi_\cP$ is an EE-invariant, we can apply the recursive relation to get:
\begin{gather}
\chi_\cP(P_3,x)=\chi_\cP(K_1 \cup K_2,x)+\alpha(x)\chi_\cP(K_2,x)+\beta(x)\chi_\cP(K_1,x) \notag \\
=x^3+\alpha(x)x^2+\beta(x)x \tag{**}\\
\chi_\cP(K_1 \cup K_2,x)=\chi_\cP(E_3,x)+\alpha(x)\chi_\cP(E_2,x)+\beta(x)\chi_\cP(K_1,x) \notag \\
=x^3+\alpha(x)x^2+\beta(x)x \tag{***}
\end{gather}
By combining (*) with (**) and (***)
$$
{\red  -x} =\alpha(x)x^2+\beta(x)x =
{\red 0}
$$
which is a contradiction.

Case (v) $H = K_3$.
\\
We compute:
$ \chi_\cP(K_1,x)=x, \ \ \ \chi_\cP(K_2,x)=x^2$ and
\begin{gather}
\chi_\cP(K_3,x)=x^3-x, \ \ \ \chi_\cP(K_1\cup K_2,x)=x^3
\tag{+}
\end{gather}
Assuming that $\chi_\cP$ is an EE-invariant, we can apply the recursive relation to get:
\begin{gather}
\chi_\cP(K_3,x)=\chi_\cP(K_2,x)+\alpha(x)\chi_\cP(K_2,x)+\beta(x)\chi_\cP(K_1,x)\notag \\
=x^3+\alpha(x)x^2+\beta(x)x \tag{++} \\
\chi_\cP(P_3,x)=\chi_\cP(K_1 \cup K_2,x)+\alpha(x)\chi_\cP(K_2,x)+\beta(x)\chi_\cP(K_1,x) \notag \\
=x^3+\alpha(x)x^2+\beta(x)x \tag{+++}
\end{gather}
By combining (+) with (++) and (+++)
$$
{\red  -x} =\alpha(x)x^2+\beta(x)x =
{\red 0}
$$
which is a contradiction.

Case (vi): $H$ has order $\geq 4$ and size $\geq 1$.
\\
We note that deleting or contracting or extracting $e$ from $H$, we obtain a graph in $\cP$.
Hence we can compute:
\begin{gather}
\chi_{\cP}(H,x) = x^{|V(H)|} -x \text{   and   }
\chi_{\cP}(H_{-e},x) = x^{|V(H)|} \notag \\
\chi_{\cP}(H_{/e},x) = x^{|V(H)|-1} \notag  \text{   and   }
\chi_{\cP}(H_{\dagger e},x) = x^{|V(H)|-2}\notag  
\end{gather}
Now we assume that $\chi_{\cP}$ is an EE-invariant and get
\begin{gather}
\chi_{\cP}(H,x) = x^{|V(H)|} -x \tag{*} \\
=
\chi_{\cP}(H_{-e},x) +
\alpha(x) \cdot \chi_{\cP} (H_{/e},x) +
\beta(x) \cdot \chi_{\cP}(H_{\dagger e},x) \notag \\
=
x^{V(H)} +
\alpha(x) \cdot x^{V(H)-1} +
\beta(x) \cdot x^{V(H)-2}  \tag{**}
\end{gather}
for $\alpha(x), \beta(x) \in \ZZ[x]$ polynomials in $x$.
\\
If $|V(H)| \geq 4$ the coefficient of $x$ in (*) is $\red -1$, \\
and in (**) it is $\red 0$, which is a contradiction.
\hfill $\Box$
\end{proof}

The graph polynomials $C(G;x)$ and $A(G;x)$ are Harary polynomials where the property $\cP$ 
contains arbitrarily large cliques.

\begin{proposition}
Both $C(G;x)$ and $A(G;x)$ are not multiplicative, hence they are not EE-invariants.
\end{proposition}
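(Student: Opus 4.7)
The plan is to apply Lemma \ref{le:multiplicative} (or rather, reuse its underlying argument) to each of the two properties in turn. Recall that multiplicativity is an axiom of EE-invariance, so showing $\chi_{\cP}(G_1 \sqcup G_2; x) \neq \chi_{\cP}(G_1; x) \cdot \chi_{\cP}(G_2; x)$ for some $G_1, G_2$ immediately rules out the EE-invariant property. The evaluation at $x=1$ via Lemma \ref{le:1} gives the cleanest separation: $\chi_{\cP}(G;1) = 1$ if $G \in \cP$ and $0$ otherwise.

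For $C(G;x)$, where $\cP$ is the class of connected graphs, I would take $H_1 = H_2 = K_1$. Both $K_1 \in \cP$, but $K_1 \sqcup K_1$ (two isolated vertices) is disconnected, hence not in $\cP$. Therefore $C(K_1 \sqcup K_1; 1) = 0$ while $C(K_1; 1)^2 = 1$, contradicting multiplicativity.

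For $A(G;x)$, where $\cP$ is the class of complete graphs, the same pair works: $K_1$ is (trivially) complete and in $\cP$, while $K_1 \sqcup K_1$ is not complete, so $A(K_1 \sqcup K_1; 1) = 0$ but $A(K_1; 1)^2 = 1$. (Alternatively one could use $K_2$ and observe that $K_2 \sqcup K_2$ is not a complete graph.) In both cases the conclusion follows from the definition of EE-invariance, which requires multiplicativity over disjoint unions.

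No real obstacle is anticipated here — the argument is just an instance of the general principle in Lemma \ref{le:multiplicative} applied to properties that happen to fail the ultimate clique-freeness hypothesis used in Theorem \ref{th:main-1}(i). The only minor care needed is to confirm that the chosen $H_1, H_2$ genuinely lie in the property while their disjoint union does not, which is immediate from the definitions of ``connected'' and ``complete''.
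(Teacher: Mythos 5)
Your proposal is correct and uses the same witness graphs as the paper ($K_1$ and $K_1 \sqcup K_1$); the paper simply computes the full polynomials $C(K_1 \sqcup K_1;x)=A(K_1 \sqcup K_1;x)=x^2-x\neq x^2$, whereas you evaluate at $x=1$ via Lemma \ref{le:1}, which is an equivalent shortcut. No gap; the argument stands.
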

\begin{proof}
$C(K_1,x) = A(K_1,x)=x$ and 
$C(K_1 \sqcup K_1,x)= A(K_1 \sqcup K_1,x)=x^2-x \neq x^2$.
\hfill $\Box$
\end{proof}

\section{$\MSOL$-definable graph polynomials}
\label{se:sol}
We assume the reader is familiar with Second and Monadic Second Order Logic
for graphs. A good source is \cite{courcelle2012graph,makowsky2014connection,kotek2016logic,makowsky2019logician}.
We distinguish between $\MSOL$ for the language of graphs, with one binary edge relation $\MSOL_g$,
and $\MSOL$ for  the language of hypergraphs $\MSOL_h$, 
with vertices and edges as elements and a binary incidence relation.
We also refer to second order logic $\SOL_g, \SOL_h$ in a similar way.

A {\em simple univariate 
$\MSOL_g$-definable 
($\MSOL_h$, $\SOL_g$, $\SOL_h$-definable) 
graph polynomial $F(G;x)$}
is a polynomial of the form
$$
F(G;x) = \sum_{A \subseteq V(G): \phi(A)} \prod_{v \in I} x,
$$
where $A$ ranges over all subsets of $V(G)$ satisfying $\phi(A)$
and $\phi(A)$ is a $\MSOL_g$-formula.
$F$ is $\MSOL_h$-definable if $A$ ranges over $V(G) \cup E(G)$
and $\phi(A)$ is a $\MSOL_h$-formula.
$F$ is $\SOL_g$-definable if $A$ ranges over $V(G)^m$.
$F$ is $\SOL_h$-definable if $A$ ranges over $(V(G) \cup E(G))^m$.

\begin{examples}
\begin{enumerate}[(i)]
\item
The independence polynomial
$IND(G;x) =\sum_i ind(G,i) \cdot x^i$,
can be written as
$$
IND(G,x) = \sum_{I \subseteq V(G)} \prod_{v \in I} x,
$$
where $I$ ranges over all independent sets of $G$.
To be an independent set is $\MSOL_g$-definable.
\item
The matching generating polynomial $M(G;x)$ is $\MSOL_h$-definable, but unlikely to be $\MSOL_g$-definable,
\cite{makowsky2006computing}, otherwise it would be fixed parameter tractable for clique-width at most $k$.
\end{enumerate}
\end{examples}

For the general case
one allows several indeterminates $x_1, \ldots, x_m$, and gives 
an inductive definition.
One may also allow an ordering of the vertices, but then one requires
the definition to be
{\em invariant under the ordering}, 
i.e., different orderings still give the same polynomial.

\begin{examples}
The Tutte polynomial is a bivariate $\MSOL_h$-definable graph polynomial using an ordering on the vertices,
\cite{makowsky2005coloured}.
Similarly, it can be shown that 
the polynomial $\xi(G;x,y,z)$ is a  trivariate $\MSOL_h$-definable graph polynomial
using an ordering on the vertices, \cite{averbouch2008most}. For a proof see Theorem \ref{th:xi-2}.
\end{examples}

A univariate graph polynomial is 
$\MSOL_g$-definable 
($\MSOL_h$, $\SOL_g$, $\SOL_h$-definable) 
if it is a substitution instance of a multivariate
$\MSOL_g$-definable 
($\MSOL_h$, $\SOL_g$, $\SOL_h$-definable) 
graph polynomial.

All we can say about the definability of Harary polynomials is the following:
\begin{proposition}
If $\cP$ is 
$\SOL_g$-definable 
so is the Harary polynomial $\chi_{\cP}(G;x)$.
\end{proposition}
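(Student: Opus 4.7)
The plan is to realize $\chi_\cP(G;x)$ directly via its partition expansion \eqref{eq:harary} by quantifying in $\SOL_g$ over the equivalence relation encoding a $\cP$-partition. Since $\cP$ is $\SOL_g$-definable, standard relativization yields an $\SOL_g$-formula $\psi_\cP(X)$ in a free vertex-set variable $X$ such that $\psi_\cP(X)$ holds iff the induced subgraph $G[X]$ lies in $\cP$. Over a binary second-order variable $R$, I would let $\Phi(R)$ be the $\SOL_g$-formula asserting that $R$ is an equivalence relation on $V(G)$ and that, for every $v$, the $R$-class $\{w : R(v,w)\}$ satisfies $\psi_\cP$. The relations $R$ with $\Phi(R)$ are exactly the $\cP$-partitions counted by the coefficients $b_i^\cP(G)$ of \eqref{eq:harary}.

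Next I would produce the falling-factorial factor $x_{(i)}$ inside the polynomial without leaving $\SOL_g$ by appealing to the clause of the Courcelle--Makowsky $\SOL$-polynomial schema that permits an auxiliary linear order on $V(G)$, provided the final polynomial is invariant under the choice of order. Fix such an order $<$, and set $\mathrm{Rep}(R) = \{v : v = \min [v]_R\}$ and $r_R(v) = |\{u \in \mathrm{Rep}(R) : u < v\}|$; both are $\SOL_g$-definable from $R$ and $<$. I would then define
$$F(G;x) \;=\; \sum_{R\,:\,\Phi(R)} \;\prod_{v \in \mathrm{Rep}(R)} \bigl(x - r_R(v)\bigr).$$
This fits the template of an $\SOL_g$-definable graph polynomial: a sum over an $\SOL_g$-definable family of relations of a product, over an $\SOL_g$-definable set, of linear forms in $x$ with $\SOL_g$-definable integer shifts. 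The inner product collapses to $x(x-1)\cdots(x-|\mathrm{Rep}(R)|+1)$, i.e.\ the falling factorial $x_{(i)}$ with $i$ the number of $R$-classes, independently of $<$; hence $F$ is order-invariant and equals $\chi_\cP(G;x)$ by \eqref{eq:harary}.

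The only delicate point is that one must invoke the general, rather than the simple, variant of the $\SOL$-polynomial schema: the simple form displayed in the excerpt, where every factor of the inner product is the bare indeterminate $x$, is too rigid, since we genuinely need the order-indexed linear factors $(x - r_R(v))$. The framework of \cite{courcelle2012graph,makowsky2014connection,kotek2016logic} was designed precisely for such ordered, order-invariant constructions, so no new logical machinery is needed; verifying order-invariance, as above, is the only real bookkeeping obstacle.
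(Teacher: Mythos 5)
Your proposal is correct and follows essentially the same route as the paper: both encode a $\cP$-partition as a binary second-order (equivalence-relation) variable $R\subseteq V(G)^2$ whose classes are required to induce graphs in $\cP$, which is exactly where $\SOL_g$-definability of $\cP$ is used. Your treatment of the monomial is in fact more careful than the paper's, which simply writes $x^{|X|}$ over such relations $X$; your order-invariant product $\prod_{v}(x-r_R(v))$ correctly reproduces the falling factorial $x_{(i)}$ of \eqref{eq:harary}, at the (acknowledged and acceptable) cost of invoking the general ordered $\SOL$-polynomial schema rather than the simple one.
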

\begin{proof}
We only prove the case where
$\cP$ is 
$\SOL_g$-definable, the other cases are similar.

Let $\phi$ be the $\MSOL_g$-formula which defines $\cP$.
Let $\Phi(X, E)$ be the formula which says that $X \subseteq V(G)^2$ is an equivalence relation on $V(G)$
such that each equivalence class induces a graph satisfying $\phi$.
Now we can write
$$
\chi_{\cP}(G;x)= \sum_{X \subseteq V(G)^2: \Phi(X, E)} x^{|X|}.
$$
\hfill $\Box$
\end{proof}

The chromatic polynomial is not $\MSOL_g$-definable, 
In the sequel we show that many Harary polynomials are not $\MSOL_g$-definable.

\section{Connection Matrices}
\label{se:connection}
In this section we  prove for many Harary polynomials
that they are not $\MSOL_g$-polynomials.

We use tools from \cite{makowsky2014connection}.
Let $G_i$ be an enumeration of all finite graphs (up to isomorphisms).
We denote by 
$G_i \sqcup G_j$ the disjoint union,
and by 
$G_i \bowtie G_j$ the join
of
$G_i$ and $G_j$.

Let $F =F(G;x) \in \ZZ[x]$ be a graph polynomial.
Let $\mathcal{H}(\bowtie, F)$ be the infinite matrix where rows and columns are labeled
by $G_i$. Then we define 
\begin{gather}
\mathcal{H}(\bowtie, F) (G_i, G_j) = F(G_i \bowtie G_j;x),
\notag \\
\mathcal{H}(\sqcup, F) (G_i, G_j) = F(G_i \sqcup G_j;x).
\notag
\end{gather}
$\mathcal{H}(\bowtie, F)$,
respectively
$\mathcal{H}(\sqcup, F)$, 
is called a {\em connection matrix} aka {\em Hankel matrix}.

\begin{theorem}[\cite{makowsky2014connection}]
\label{th:KM}
If $F(G;x)$ is $\MSOL_g$-definable, then
$\mathcal{H}(\bowtie, F)$ and
$\mathcal{H}(\sqcup, F)$
have finite rank over the ring $\ZZ[x]$.
\end{theorem}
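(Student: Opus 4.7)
The plan is to apply a Feferman--Vaught style theorem for $\MSOL_g$ to the operations $\sqcup$ and $\bowtie$: an $\MSOL_g$-definable polynomial $F(G;x)$ depends on $G$ only through finitely much $\MSOL$-information (a type up to some fixed quantifier rank $q$ in a fixed number of free monadic variables), so the connection matrices can be split into a finite sum of rank-one matrices over $\ZZ[x]$. Concretely, I would first present $F$ as a substitution instance of a multivariate $\MSOL_g$-polynomial of the form
\[
\tilde F(G;\bar x) = \sum_{\bar U : \phi(G,\bar U)} \prod_{j} x_j^{|U_j|},
\]
where $\phi$ is an $\MSOL_g$-formula of some quantifier rank $q$ and $\bar U = (U_1,\ldots,U_m)$ ranges over tuples of subsets of $V(G)$. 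Let $\tau_1,\ldots,\tau_N$ enumerate the $\MSOL_g$-types of rank $q$ in $m$ free monadic second-order variables; this list is finite.

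The main step is the Feferman--Vaught theorem for $\sqcup$: the $q$-type of $(G_1 \sqcup G_2,\ \bar U_1 \sqcup \bar U_2)$ is effectively computable from the types of $(G_1,\bar U_1)$ and $(G_2,\bar U_2)$. Because the weight $\prod_j x_j^{|U_j|}$ factors multiplicatively across $\sqcup$, partitioning the defining sum by the pair of realized types yields
\[
F(G_1 \sqcup G_2;x) = \sum_{i,j=1}^N c_{i,j}\, f_{\tau_i}(G_1;x)\, f_{\tau_j}(G_2;x),
\]
with coefficients $c_{i,j} \in \{0,1\}$ encoding when the combined type forces $\phi$, and $f_\tau(H;x)$ the weighted count of $\bar U$ on $H$ realizing $\tau$. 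This exhibits $\mathcal{H}(\sqcup, F)$ as a $\ZZ[x]$-linear combination of at most $N^2$ rank-one matrices, hence of finite rank.

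To handle $\bowtie$ I would observe that $G_1 \bowtie G_2$ is a simple $\MSOL$-transduction of $G_1 \sqcup G_2$ once the two vertex sets are named by unary markers. Pulling $\phi$ back along this transduction produces an $\MSOL_g$-formula of some rank $q'$ on $G_1 \sqcup G_2$ (with two extra unary predicates), and rerunning the disjoint-union argument yields the analogous finite-rank conclusion for $\mathcal{H}(\bowtie, F)$. The delicate point, and the one I expect to be the main obstacle, is that the classical Feferman--Vaught theorem deals with truth values, whereas here it must simultaneously track the free monadic variables and their associated monomial weights; this parameterized version is precisely the one developed in \cite{makowsky2014connection}, so once it is invoked, the decomposition above is essentially routine bookkeeping.
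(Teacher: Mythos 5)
The paper offers no proof of this theorem; it is quoted verbatim from \cite{makowsky2014connection}, and your reconstruction is essentially the argument given there: a Feferman--Vaught splitting for $\MSOL$-types that is parameterized to carry the monomial weights, yielding a bilinear decomposition of $F(G_1 \sqcup G_2;x)$ into finitely many rank-one terms, with $\bowtie$ reduced to $\sqcup$ via a quantifier-free transduction using two unary markers. Your closing remark correctly isolates the only non-routine ingredient (the weighted, type-tracking version of Feferman--Vaught), and your reduction of the join to the disjoint union also explains implicitly why the statement is confined to $\MSOL_g$: the new join edges are definable from the vertex markers but would break the edge-partition needed for the $\MSOL_h$ analogue.
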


The following lemmas are needed for Theorem \ref{thm:12}.
\begin{lemma}
\label{lm:non-def-gp}
Given a  graph polynomial $F$, 
and an infinite sequence of non-isomorphic graphs
$H_i, i \in \NN$,
let $f:\NN\to \NN$ be an unbounded function such that 
for every $k \in \NN$, 
$F(H_i\bowtie H_j,k)=0$ iff $i+j>f(k)$.
\\
Then the matrix $\mathcal{H}(\bowtie,p)$ has infinite rank.
\\
The same also holds when $\bowtie$ is replaced by the disjoint union $\sqcup$.
\end{lemma}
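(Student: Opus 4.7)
The plan is to prove infinite rank by showing that, for every $N\in\NN$, one can exhibit an $N\times N$ submatrix of $\mathcal{H}(\bowtie,F)$ whose determinant is a nonzero polynomial in $\ZZ[x]$. Letting $N$ grow then gives infinite rank of $\mathcal{H}(\bowtie,F)$ over $\ZZ[x]$ (equivalently, over the fraction field $\mathbb{Q}(x)$).

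Given $N$, I would first use the unboundedness of $f$ to pick $k\in\NN$ with $m:=f(k)\ge N+1$, and then consider the principal $(m-1)\times(m-1)$ submatrix $M$ of $\mathcal{H}(\bowtie,F)$ indexed by $H_1,\ldots,H_{m-1}$ on both rows and columns. Evaluating $M$ at $x=k$, the hypothesis forces $M_{ij}(k)=0$ precisely when $i+j>m$; so the nonzero entries of $M(k)$ form an anti-triangular pattern whose anti-diagonal $\{(i,m-i):1\le i\le m-1\}$ consists entirely of nonzero entries, while every entry strictly ``below'' it vanishes.

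A standard cofactor expansion then yields
\[
\det M(k) \;=\; (-1)^{(m-1)(m-2)/2}\prod_{i=1}^{m-1} F(H_i\bowtie H_{m-i};\,k),
\]
which is nonzero, since each factor is nonzero by hypothesis (the indices satisfy $i+(m-i)=m\le m$). Consequently $\det M\in\ZZ[x]$ is a nonzero polynomial, so $M$ has rank $m-1\ge N$. As $N$ was arbitrary, $\mathcal{H}(\bowtie,F)$ has infinite rank. The statement for $\mathcal{H}(\sqcup,F)$ is obtained by the identical argument with $\bowtie$ replaced by $\sqcup$ throughout.

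I do not anticipate a substantive obstacle; the one point worth flagging is the choice of submatrix size $m-1$ tuned to the current value $f(k)$, which is precisely what converts the purely combinatorial zero-pattern hypothesis into a clean anti-triangular evaluation and avoids any need for quantitative information about the nonzero entries.
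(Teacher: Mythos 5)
Your proof is correct, and it is exactly the standard connection-matrix argument that the paper relies on but does not write out (the lemma is stated without proof; the technique is that of Kot\'ek--Makowsky's finite-rank method): choose $k$ with $f(k)$ large, restrict to the principal submatrix indexed by $H_1,\dots,H_{f(k)-1}$, observe that the evaluation at $x=k$ is anti-triangular with nonzero anti-diagonal (since $i+(f(k)-i)=f(k)\le f(k)$ forces those entries to be nonzero by the ``iff''), and conclude that the determinant is a nonzero element of $\ZZ[x]$, giving arbitrarily large finite minors of full rank. The only permutation surviving in the determinant is the reversal, by the counting argument $\sum_i(i+\sigma(i))=(m-1)m$, so your product formula and sign are right, and the same reasoning transfers verbatim to $\sqcup$.
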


Given a graph $H$
we denote by $Forb^{sub}(H)$ ($Forb^{ind}(H)$) the class of graphs
which do not contain an (induced) subgraph isomorphic to $H$.
If $H$ is a complete graph the two classes coincide, and we omit the superscript.

We now prove specific cases where we can apply Lemma \ref{lm:non-def-gp} with
$H_i = K_i$ the complete graph on $i$ vertices. 
\begin{lemma}
\label{le:forb(kh)}
\label{le:forb(h)}
\begin{enumerate}[(i)]
\item
Let $\cP_1 \subseteq Forb(K_h)$.
Then $\chi_{\cP_1}(K_i;k) =0$  iff $i > hk$.
\item
Let $\cP_2 \subseteq Forb^{sub}(H)$ for some
connected graph $H$ on $h$ vertices. 
Then $\chi_{\cP_2}(K_i;k) =0$  iff $i > hk$.
\end{enumerate}
\end{lemma}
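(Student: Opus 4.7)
The plan is to reduce the question about $\cP$-colorings of $K_i$ to a purely arithmetic question about partitioning an integer, and then use the forbidden-subgraph hypothesis on $\cP$ to bound the allowed part sizes. The key observation is that every subset of $V(K_i)$ of size $s$ induces a copy of $K_s$, so a $\cP$-coloring of $K_i$ with at most $k$ colors is in bijection with partitions of $[i]$ into at most $k$ non-empty blocks whose sizes $s_1, \ldots, s_r$ all satisfy $K_{s_j} \in \cP$. Hence $\chi_{\cP}(K_i;k) > 0$ iff $i$ can be written as a sum of at most $k$ positive integers drawn from the set $\{s : K_s \in \cP\}$.

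For part (i), the hypothesis $\cP_1 \subseteq Forb(K_h)$ immediately forces $s < h$ for every $K_s \in \cP_1$, because $K_s$ contains $K_h$ as a subgraph whenever $s \geq h$. So every block has size at most $h-1$, meaning that $k$ colors can cover at most $k(h-1)$ vertices of $K_i$. In particular, whenever $i > hk$ (and a fortiori when $i > k(h-1)$) no valid partition exists, so $\chi_{\cP_1}(K_i;k)=0$.

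For part (ii), I would first observe that for a connected graph $H$ on $h$ vertices, $K_s$ contains $H$ as a subgraph iff $s \geq h$: the forward direction is trivial, and the converse holds because $K_s$ already contains every simple graph on at most $s$ vertices (inject the vertex set and note that all needed edges are present). Consequently $K_s \in \cP_2 \subseteq Forb^{sub}(H)$ again forces $s < h$, and the same counting bound as in part (i) delivers the conclusion; the connectedness hypothesis on $H$ is not actually needed for this upper-bound direction, but it is natural in the intended application.

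The main subtlety lies in the reverse implication of the ``iff'': for $i$ not exceeding the threshold one must genuinely exhibit a valid partition, which depends on $\cP$ containing enough small complete graphs (typically at least $K_1$, which allows an arbitrary refinement into singletons once a partition with legal sizes exists). This converse is therefore the only step where the particular $\cP$ enters non-trivially. For the downstream application to Lemma~\ref{lm:non-def-gp}, however, all that is needed is that the critical threshold at which $\chi_\cP(K_n;k)$ switches from non-zero to zero be an unbounded function of $k$, and the bound $c(h)\cdot k$ produced by the forward direction already supplies this.
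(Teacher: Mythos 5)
Your proof of the forward direction is the same pigeonhole argument the paper gives: partitioning $V(K_i)$ with $i>hk$ into at most $k$ blocks forces a block of size at least $h$, which induces a $K_h$ and therefore contains the connected graph $H$ as a subgraph, so no block can lie in $\cP$. On that direction you and the paper coincide, and your reduction to integer partitions with parts drawn from $\{s: K_s\in\cP\}$ is a clean way to phrase it.

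The caveat you raise about the reverse implication is not a defect of your write-up but a genuine issue with the lemma: the paper's own proof also establishes only the forward direction, and the ``iff'' with threshold $hk$ is in fact false. As you note, every admissible block has size at most $h-1$, so $\chi_{\cP_1}(K_i;k)=0$ already for all $i>(h-1)k$; hence at $i=hk$ (with $h\ge 2$, $k\ge 1$) the polynomial vanishes although $i\not>hk$. The paper's closing remark that $\cP=Forb(K_2)$ gives the chromatic polynomial makes this concrete: $\chi(K_i;k)=0$ iff $i>k$, not iff $i>2k$. The correct threshold is $(h-1)k$, and the reverse implication additionally requires that $\cP$ contain $K_s$ for all $s\le h-1$ (true for the full class $Forb(K_h)$, but not for an arbitrary subclass $\cP_1\subseteq Forb(K_h)$, for which $\chi_{\cP_1}(K_i;k)$ may vanish well below any linear threshold, or fail to have threshold behaviour at all). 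Since the downstream Lemma~\ref{lm:non-def-gp} genuinely uses both directions of an ``iff'' against an unbounded $f(k)$, this is a point the paper should have addressed; your proposal at least isolates exactly where the extra hypothesis on $\cP$ is needed.
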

\begin{proof}
If we partition a set of size $i > hk$ into $k$ disjoint sets, at least on of these sets has size $>h$.
Hence, if we partion $K_i$, at least one of these sets induces a $K_h$. So hence $\chi_{\cP_1}(K_i;k) =0$.
Since $H$ is a subgraph of $K_h$, $\chi_{\cP_2}(K_i;k) =0$.
\\
Note that for $\cP = Forb(K_2)$ this is the chromatic polynomial.
\hfill $\Box$
\end{proof}


\begin{theorem} \label{thm:12}
Let $\cP$ be a non-trivial graph property.
If $\cP$ is (i) monotone, (ii) ultimately clique-free, or (iii) weakly sparse,
the Harary polynomial $\chi_{\cP}(G;x)$ is not $\MSOL_g$-definable.
\end{theorem}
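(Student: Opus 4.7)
The plan is to apply the connection-matrix method and derive infinite rank, contradicting $\MSOL_g$-definability. Suppose for contradiction that $\chi_{\cP}(G;x)$ is $\MSOL_g$-definable. By Theorem~\ref{th:KM} the connection matrix $\mathcal{H}(\bowtie,\chi_{\cP})$ has finite rank over $\ZZ[x]$, and in particular its specialization at any integer $k$ has finite rank over $\ZZ$. The natural choice of test sequence is $H_i=K_i$, since $K_i\bowtie K_j\cong K_{i+j}$ makes the matrix entries equal to $\chi_{\cP}(K_{i+j};x)$. By Lemma~\ref{lm:non-def-gp}, it then suffices to exhibit an unbounded $f:\NN\to\NN$ such that for every fixed integer $k$,
$$\chi_{\cP}(K_{i+j};k)=0 \iff i+j>f(k).$$

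For case (ii), when $\cP$ is ultimately clique-free, the definition yields $\cP\subseteq Forb(K_h)$ for some $h$, and Lemma~\ref{le:forb(kh)}(i) delivers the required biconditional with $f(k)=hk$. For case (iii), when $\cP$ is weakly sparse, $\cP\subseteq Forb^{sub}(K_{t,t})$ for some $t$; since $K_{t,t}$ is connected on $2t$ vertices, Lemma~\ref{le:forb(h)}(ii) gives the biconditional with $f(k)=2tk$. Both functions are unbounded, so Lemma~\ref{lm:non-def-gp} applies in these two cases and the contradiction follows.

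For case (i), I would show that monotone combined with non-triviality implies ultimately clique-free, reducing to case (ii). Non-triviality supplies some $H\notin\cP$ of order $h$. Since $H\subseteq K_h$ and $\cP$ is closed under subgraphs, any $G\in\cP$ containing $K_h$ as a subgraph would force $K_h\in\cP$ and hence $H\in\cP$, a contradiction; therefore $\cP\subseteq Forb(K_h)$. The main obstacle in the overall argument is really just choosing the right test sequence---once the complete graphs are fixed, the clique-counting lemmas do all the work. The one subtlety worth flagging is the non-vanishing direction of the biconditional in Lemma~\ref{le:forb(kh)}, which demands that enough small complete graphs lie in $\cP$ to allow a partition of $K_{hk}$ into $k$ cliques of size at most $h$ each belonging to $\cP$; for monotone $\cP$ this is immediate from non-triviality, and for ultimately clique-free or weakly sparse $\cP$ it is built into the hypotheses of the lemmas.
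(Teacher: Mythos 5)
Your proof follows the paper's argument essentially step for step: the test sequence is $K_i$ with $K_i \bowtie K_j \cong K_{i+j}$, the vanishing threshold comes from Lemma~\ref{le:forb(kh)}, infinite rank from Lemma~\ref{lm:non-def-gp}, and the contradiction from Theorem~\ref{th:KM}. The two organizational differences are harmless: you handle case (iii) directly via $\cP \subseteq Forb^{sub}(K_{t,t})$ and Lemma~\ref{le:forb(h)}(ii) with $h=2t$, where the paper first passes from biclique-free to clique-free and reuses case (ii); and you reduce case (i) to case (ii) by showing that a non-trivial monotone property is ultimately clique-free (a forbidden $H$ of order $h$ forces $K_h$ to be forbidden), where the paper instead invokes a connected forbidden subgraph. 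Your reduction in (i) is correct and arguably cleaner, since the paper does not justify why the forbidden subgraph may be taken connected.

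The subtlety you flag at the end is the one real issue, and your resolution of it is only half right. For the argument to produce an \emph{unbounded} $f$, one needs the non-vanishing direction: $\chi_{\cP}(K_i;k)>0$ for all $i$ up to a threshold tending to infinity with $k$, which requires that $\cP$ contain cliques of some fixed positive size so that $K_i$ can actually be partitioned into $k$ admissible parts. For monotone non-trivial $\cP$ this does follow, as you say: $K_1\in\cP$, one takes $h'=\max\{j: K_j\in\cP\}$ and $f(k)=h'k$. But for cases (ii) and (iii) it is \emph{not} ``built into the hypotheses of the lemmas'': the inclusion $\cP\subseteq Forb(K_h)$ says nothing about which small cliques lie in $\cP$, and a non-trivial ultimately clique-free $\cP$ containing no clique at all (for instance, the class of all cycles of length at least $4$) makes $\chi_{\cP}(K_i;k)$ identically zero, so the clique-indexed submatrix has rank $0$ and the method yields nothing. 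This is a genuine gap in your write-up of (ii) and (iii) --- though it is equally a gap in the paper, whose proof of Lemma~\ref{le:forb(kh)} establishes only the vanishing direction of the stated ``iff''. Closing it requires either an additional hypothesis guaranteeing that $\cP$ contains some small cliques, or a different test sequence adapted to $\cP$.
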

\begin{proof}
(i): 
If $P$ is non-trivial and monotone there is a connected graph $H$ with $P \subset Forb^{sub}(H)$.
By Lemma \ref{le:forb(h)}(ii) we get 
$\chi_P(K_i;k) =0$  iff $i > hk$.
\\
By Lemma \ref{lm:non-def-gp}, $\mathcal{H}(\bowtie,\chi_P(G;x))$
has infinite rank. Now we use Theorem \ref{th:KM}.
\\
(ii):
$\cP \subseteq Forb(K_h)$, hence
by Lemma \ref{le:forb(kh)}(i), we get again
$\chi_P(K_i;k) =0$  iff $i > hk$. Then we proceed as in (i).
\\
(iii):
$\cP$ is ultimately clique-free hence 
there is $h \in \NN$ with 
$P \subseteq Forb(K_h)$. So we proceed as in (ii).
\hfill $\Box$
\end{proof}

The graph polynomials $C(G;x)$ and $A(G;x)$ are Harary polynomials where the property $\cP$
contains graphs of maximal density.

\begin{proposition}
Both $C(G;x)$ and $A(G;x)$ are not $\MSOL_g$-definable.
\end{proposition}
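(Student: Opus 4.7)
The plan is to invoke Lemma~\ref{lm:non-def-gp} in its disjoint-union form on the sequence $H_i = E_i$, the edgeless graph on $i$ vertices. These graphs are pairwise non-isomorphic, and because $E_i \sqcup E_j = E_{i+j}$, it suffices to pin down the zeros of $C(E_n;k)$ and $A(E_n;k)$ on the diagonal $n = i+j$.

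The key observation in both cases is that the only admissible parts in a $\cP$-partition of $V(E_n)$ are singletons. For $C$, any subset of $V(E_n)$ of size $\geq 2$ induces an edgeless, hence disconnected, graph; for $A$, any such subset induces a non-complete graph. So in either case the only valid partition of $V(E_n)$ is into $n$ singletons, and consequently $C(E_n;k) = 0$ and $A(E_n;k) = 0$ each hold iff $k < n$. Translating back to the connection matrix, both $C(E_i \sqcup E_j;k) = 0$ and $A(E_i \sqcup E_j;k) = 0$ are equivalent to $i+j > k$, so we may take $f(k) = k$, which is unbounded.

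Applying Lemma~\ref{lm:non-def-gp} ($\sqcup$ version), the Hankel matrices $\mathcal{H}(\sqcup, C)$ and $\mathcal{H}(\sqcup, A)$ both have infinite rank over $\ZZ[x]$, so Theorem~\ref{th:KM} forces that neither $C(G;x)$ nor $A(G;x)$ is $\MSOL_g$-definable. There is no real obstacle to overcome; the only design decision is choosing edgeless graphs as test objects, which are simultaneously maximally restrictive for the "connected" and "complete" properties and so yield the sharp $i+j > f(k)$ dichotomy needed by the lemma.
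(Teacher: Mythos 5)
Your proof is correct and follows essentially the same route as the paper: the paper also applies Lemma~\ref{lm:non-def-gp} in its disjoint-union form, only with the matchings $M_n$ ($n$ disjoint copies of $K_2$, so $M_i \sqcup M_j = M_{i+j}$, and each admissible part has at most two vertices) in place of your edgeless graphs $E_n$. Your choice of $E_n$ works just as well, since $K_1$ is both connected and complete while no larger subset of $V(E_n)$ induces an admissible part, and it yields the clean threshold $i+j>k$ required by the lemma.
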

\begin{proof}
In both cases we look at the graph $M_n$ of order $2n$ which consists of $n$ disjoint copies
of $K_2$. We note that $M_i \sqcup M_j = M_{i+j}$, and we get
$C(M_n;k)= A(M_n;k)= 0$ for  $ n > 2k$. So we can apply Lemma \ref{lm:non-def-gp}
with the join replaced by the disjoint union.
\hfill $\Box$
\end{proof}

\section{Conclusions and Open Problems}
\label{se:conclu}
We have initiated a systematic study of Harary polynomials.

In this paper
we have shown that among 
the  Harary polynomials $\chi_{\cP}$ with $\cP$
hereditary monotone, or minor closed,
the chromatic polynomial is the only EE-invariant.

We have also shown that the  Harary polynomials $\chi_{\cP}$
are not $\MSOL_g$-definable if $\cP$ is either monotone, ultimately clique-free, or weakly sparse.
This includes the chromatic polynomial.
However, the chromatic polynomial is $\MSOL_h$-definable.

\begin{question}
Is there a Harary polynomial, different from the chromatic polynomial, which is
$\MSOL_h$-definable 
and/or an EE-invariant?
\end{question}

We suspect (but do not conjecture) that
the chromatic polynomial is the 
only Harary polynomial which is an EE-invariant? 

In future research we continue the study 
of the complexity of evaluating Harary polynomials,
initiated in \cite{goodall2018complexity}. 

\paragraph{ \bf Acknowledgement:} We would like to thank P. Tittmann for his careful reading of drafts of this paper.
We would also like to thank three anonymous referees of the previous version of this paper for
their valuable and justified comments.
This research profited also a lot from two Dagstuhl Seminars:
\begin{itemize}
\item
Dagstuhl Seminar 16241, June 12--17, 2016, 
\\
Graph Polynomials: Towards a Comparative Theory
\item
Dagstuhl Seminar 19401, September 29--October 4, 2019, 
\\
Comparative Theory for Graph Polynomials
\end{itemize}


\bibliographystyle{splncs04}
\bibliography{wg-ref,MM-ref}
\appendix
\section{Proof of Theorem \ref{th:xi-2}}
\label{app:xidef}
Theorem
\ref{th:xi-2}
states that
the most general EE-invariant $\xi(G;x,y,z)$ is $\MSOL_h$-definable for graphs with a linear order
on the vertices. Furthermore, this definition is invariant under the particular order of the vertices.
\begin{proof}
\begin{gather}
\xi(G;x,y,z)=\sum_{A,B\subseteq E(G)} x^{c(A\cup B)-c_{cov}(B)}y^{|A|+|B|-c_{cov}(B)}z^{c_{cov}(B)}=
\notag \\
\sum_{A,B\subseteq E(G)} x^{c(A\cup B)}y^{|A|+|B|} (\frac{z}{xy})^{c_{cov}(B)}
\notag
\end{gather}

Let $(V(G), E(G), Ord(G))$ be a graph with a linear ordering of the vertices.
\begin{itemize}
\item
Let $\phi(A,B)$ be the formula $cov(A) \cap cov(B) = \emptyset$ with
\\
$cov(A,v) =  \exists  v_1, v_2 ( (v_1, v_2) \in A \wedge (v = v_1 \vee v = v_2))$ 
where $A$ ranges over sets of edges, and $v, v_1, v_2$ range over vertices.
\item
We write 
for sets of edges $A, B$:
\begin{gather}
X^{|A|} = \prod_{e: e \in A} X 
\text{   and   }
X^{|A|+|B|} = \prod_{e: e \in A \sqcup B} X 
\notag
\end{gather}
where
$A \sqcup B$ is the disjoint union of $A$ and $B$.
\item
We write for a vertex $v$ and a set of edges $A$:
\begin{gather}
X^{c(F)} = \prod_{v: \phi_c(F,v)} X 
\notag
\end{gather}
where $\phi_c(F, v)$ says that $v$ is the first vertex of a connected component of the graph $(V(G),F)$. 
If $F = A \sqcup B$ we use instead
$\psi_c(A, B, v)$ which says $ \exists F (\phi_c(F, v) \wedge F= A \sqcup B)$.
\end{itemize}

\begin{itemize}
\item
We write for a vertex $w$ and a set of edges $B$:
\begin{gather}
X^{cov(A)} = \prod_{w: \phi_{cov}(B,w)} X
\notag
\end{gather}
where $\phi_{cov}(B,w)$ says that $w$ is the first vertex of a connected component of the graph $(V(B),B)$. 
\end{itemize}
Then we can write
$$
\xi(G;x,y,z)=
\sum_{A, B: \phi(A,B) }
\left(
\prod_{u: \phi_c(A,B,u)} x
\cdot
\prod_{e: e \in A \sqcup B} y
\cdot
\prod_{w: \phi_{cov}(B,w)} \frac{z}{xy}
\right)
$$

\hfill $\Box$
\end{proof}

\end{document}